\DeclareMathOperator{\sign}{sign}
\begin{document}
\title[Embeddings and Duality]{Dyadic Martingale Hardy-amalgam spaces: Embeddings and Duality}
\author{Justice Sam Bansah and Beno\^it F. Sehba}
\address{Department of Mathematics, University of Ghana, P. O. Box LG 62 Legon, Accra, Ghana}
\email{fjaccobian@gmail.com}
\address{Department of Mathematics, University of Ghana, P. O. Box LG 62 Legon, Accra, Ghana}
\email{bfsehba@ug.edu.gh}

\newtheorem{theorem}[subsection]{Theorem}
\newtheorem{proposition}[subsection]{Proposition}
\newtheorem{lemma}[subsection]{Lemma}
\newtheorem{corollary}[subsection]{Corollary}
\theoremstyle{definition}
\newtheorem{definition}[subsection]{Definition}
\newtheorem{remark}[subsection]{Remark}
\newtheorem{example}[subsection]{Example}
\newtheorem{solution}[subsection]{Solution}

\begin{abstract}
We present in this paper some embeddings of various dyadic martingale Hardy-amalgam spaces $H^S_{p,q},\,\, H^s_{p,q},\,\,H^*_{p,q},\,\,\mathcal{Q}_{p,q}$ and $\mathcal{P}_{p,q}$ of the real line. In the same settings, we characterize the dual of $H^s_{p,q}$ for large $p$ and $q$. We also introduce a Garsia-type space $\mathcal{G}_{p,q}$ and characterize its dual space. 
\end{abstract}
\keywords{ Martingales, dyadic filtration, amalgam space}
\subjclass[2010] {Primary: 60G42, 60G46 Secondary: 42B25, 42B35 }
\maketitle
\section{Introduction}\label{sec1}
Our setting is a quadruplet $(\mathbb{R},\mathcal{F},\{\mathcal{F}\}_{n\ge 0},\mathbb{P})$ where $\mathcal{F}_n$ is the sigma algebra generated by all dyadic intervals of $\mathbb{R}$ of lentgh $2^{-n}$, $\mathbb{P}$ a probability measure, and $\mathcal{F}$ stands for the sigma algebra generated by the union of the $\mathcal{F}_n$s. In this work, we are interested in the study of the embeddings relations between the recently introduced martingale Hardy-amalgam spaces in \cite{BanSeh} and the characterization of their dual spaces for large exponents. We also define a natural extension of the Garsia space to the Wiener amalgam setting for which we characterize the dual space. 
\vskip .2cm
Martingale inequalities have proven to be very useful in various applications. For instance, the justification of martingale convergence theorems for both forward and backward convergences have been established by applying classical martingale inequalities (see \cite{Hyt}). In Fourier analysis, we have the involvement of martingale inequalities in the establishment of the boundedness of the maximal Fej\'er operator (see \cite{Xie}). Martingale inequalities also play some important roles in the study of properties of Brownian motions (see for example \cite{BurkGund}).
\vskip .2cm
In the past few decades, there have been various studies about the embeddings of classical martingale Hardy spaces. Some of these discussions can be found in \cite{Garsia, Long, Ferenc}. There are many other important inequalities involving martingales that have been proved and applied in the literature (see for example \cite{Burk, BurkGund, Izu, IzuKaza, Kaza,Ferenc}). For instance in \cite{Ferenc} we can find a discussion on the Doob's inequality, the Convexity and Concavity inequalities for martingales. We can also find a discussion on norm inequalities for operators of matrix type on martingales and a proof of Burkholder-Davis-Gundy inequality in \cite{BurkGund,Ferenc}. A discussion on the weighted norm inequality similar to the Burkholder-Davis-Gundy inequality can also be found in \cite{Izu} and inequalities of operators of non-matrix type on martingales with a weighted probability measure are discussed in \cite{Kaza}. It is also interesting to mention that an analogue of weighted norm inequality for the Hardy maximal function result is also valid in the setting of martingale theory (see \cite{IzuKaza}). Some of these classical results will be very useful in this paper and for consistency purposes, we will restate these classical results when needed in the appropriate section below. 

Let $\lambda = (\lambda_n)_{n\in\mathbb{N}}$ be an adapted sequence (this will be made clear in the next section) and let $d_n\lambda=\lambda_n-\lambda_{n-1}.$ Then $\lambda$ is said to be $L_p$-variation integrable bounded if $\|\sum_n|d_n\lambda|\|_{L_p}<\infty.$ It is also said to be $L_p$-jump bounded if $\sup_n|d_nf|\in L_p$. The space of all martingales $\lambda$ that are $L_p$-variation integrable bounded is often referred to as variation integrable space (see \cite{Long}). It is also referred to as the Garcia space and it is denoted $\mathcal{G}_p$ (see \cite{Ferenc}). Also the space all martingales $\lambda$ that are $L_p$-jump bounded is simply referred to as the Jump bounded space (see \cite{Long}). This is the space denoted as $\mathcal{BD}_p$ in \cite{Ferenc}. The Garcia space is shown to be a component of the Davis decompositions of martingales in the classical martingale Hardy spaces (see \cite{Long}). It is also established that the dual space of the Garcia space is the Jump bounded space (see \cite{Long,Ferenc}).
\vskip .2cm
We recall (see \cite{BanSeh}) that if $T$ is either the quadratic variation ($S$), the conditional quadratic variation ($s$) or the maximal function ($Mf:f^*$), then the corresponding martingale Hardy-amalgam space $H_{p,q}^T$ is the space of all martingales $f$ such that $T(f)$ belongs to the Wiener amalgam space $L_{p,q}$, $0<p,q\le \infty$. The amalgam space of predictive martingales $\mathcal{P}_{p,q}$ and the amalgam space of martingales with predictive quadratic variation $\mathcal{Q}_{p,q}$ are defined as in the classical case (see \cite{Ferenc}) by just replacing the Lebesgue space in the definition by the amalgam space $L_{p,q}$.
\vskip .2cm

In this paper, we extend the embeddings of the classical martingale Hardy spaces, (see \cite[Theorem 2.11]{Ferenc}), to the martingale Hardy-amalgam spaces.  We will also extend the Doob's inequality and the Burkholder-Davis-Gundy inequality of the classical martingale Hardy spaces to the martingale Hardy-amalgam spaces.We shall also introduce the space of $L_{p,q}$-variation integrable bounded martingales $\mathcal{G}_{p,q}$ and the space $L_{p,q}$-jump bounded martingales $\mathcal{BD}_{p,q},$ and we shall also refer to these spaces as the Garcia (or the variation integrable) space and Jump bounded space respectively. We will then establish that the dual of $\mathcal{G}_{p,q}$ is $\mathcal{BD}_{p',q'}$ where $(p,p')$ and $(q,q')$ are conjugate pairs.  In the classical case, the space $\mathcal{G}_p$ played an important role in establishing the famous Fefferman's inequality (see for example \cite{Long}). This motivated us to introduce $\mathcal{G}_{p,q}$ and study one property of this space, which is its duality.  
\vskip .2cm
It was established in \cite{BanSeh} that the dual of $H^s_{p,q}$ when $0<p\le q\le1$ is a Campanato-type space while the case $1<p,q<\infty$ was left open. We prove in this paper that for $1<q\le p\le 2$ or $2\le p\le q<\infty$, the duality of $H^s_{p,q}$ identifies with $H^s_{p',q'}$.
\vskip .2cm
We note that the setting of \cite{BanSeh} is more general. In a previous version of \cite{BanSeh}, we have tried to solve embeddings and duality problem but this quickly appeared to be a difficult task. A key argument in the dyadic setting is Lemma \ref{lem:idmartingale} which allows us to obtain the results of this paper. For the embeddings, we combine this lemma with known classical results. The proofs of duality results are more demanding but again Lemma \ref{lem:idmartingale} is relevant here as it is used in the proof of the Doob's inequality which is in its turn used in our proofs.
\vskip .2cm
The outline of this paper is as follows. In Section \ref{sec2}, we will get familiar with notations and recall the various definitions appropriate for this work. The main results in this work are presented in Section \ref{sec3}. In Section \ref{sec4}, we provide proof for the first result of this work which is the extension of the classical martingale Hardy spaces embeddings to the martingale Hardy-amalgam spaces. We also provide proofs for the extension of the Doob's inequality and Burkholder-Davis-Gundy inequalities establishing the second and third result of this work. In Section \ref{sec5}, we will characterize the dual space of $H^s_{p,q}$ for $1<p\le q<\infty$ and also identify the dual space of $\mathcal{G}_{p,q}$. We will close the paper with a conclusion where we will make some other observations.

\section{Notations and Basic Definitions}\label{sec2}
In this section, we introduce the necessary definitions and recall the various martingale Hardy-amalgam spaces we shall consider in this paper. We will also state some important classical results, such as Doob's inequality and the Burkholder-Davis-Gundy inequality that we will need later in this work. 

Let $\mathbb{R}$ be the set of real numbers and consider the following dyadic intervals of $\mathbb{R};$ $$I_{n,k} = \left[\frac{k}{2^n},\frac{k+1}{2^n}\right)\,\, n\in\mathbb{N},\,\,k\in\mathbb{Z}.$$ Let $\mathcal{D}_n=\{I_{n,k},\,\,k\in\mathbb{Z}\},\,\,n\in\mathbb{N}.$ 
Let $\mathcal{F}_n=\sigma(\mathcal{D}_n)$ be the $\sigma-$algebra generated by $\mathcal{D}_n.$ Then $\{\mathcal{F}_n\}_{n\in\mathbb{N}}$ is a (dyadic) filtration. 
With this, we define the probability space as $$\mathbf{Ps}:=(\mathbb{R}, \mathcal{F}, \{\mathcal{F}_n\}_{n\in\mathbb{N}}, \mathbb{P})$$ where $\mathbb{P}$ is the probability measure and $\mathcal{F}_n\subseteq\mathcal{F}$ for all $n\in\mathbb{N}$. Thus all the martingales defined in this paper are with respect to this probability space with the underlying filtration $\{\mathcal{F}_n\}_{n\in\mathbb{N}}.$ Let $J_{k,n,j}\in \mathcal{D}_n$ be the dyadic interval defined as $$J_{k,n,j} = \left[\frac{k+j2^n}{2^n},\frac{k+1+j2^n}{2^n}\right).$$ Then \begin{eqnarray}\label{dia1}A_j = [j,j+1) = \bigcup_{k=0}^{2^n-1}J_{k,n,j}\end{eqnarray} 
Therefore, $A_j\in\mathcal{F}_n$ for all $n.$ 

Note that the $A_j$'s are dyadic intervals. Also, $A_i\cap A_j=\emptyset$ for $i\ne j$ and $\bigcup_jA_j=\mathbb{R}.$ 
\vskip .2cm
Let $L_p$ denote the classical Lebesgue space and let $\ell_q$ denote the sequence space. For $f\in L_p$, we will be using the notation
$$\|f\|_p:=\|f\|_{L_p}:=\left(\int_{\mathbb{R}}|f|^pd\mathbb{P}\right)^{1/p}.$$
The amalgam space of $L_p$ and $\ell_q$ is defined as the space $$L_{p,q}(\mathbb{R}) = \left\{f : \sum_j\|f\mathbf{1}_{A_j}\|_p^q<\infty\right\}$$ equipped with the (quasi)-norm $$\|f\|_{L_{p,q}(\mathbb{R})}=\left(\sum_{j\in\mathbb{Z}}\left(\int_{\mathbb{R}}|f|^p\mathbf{1}_{A_j}\mathrm{d}\mathbb{P}\right)^{\frac{q}{p}}\right)^{\frac{1}{q}}$$ for $0<p,q<\infty$ and $$\|f\|_{L_{p,\infty}(\mathbb{R})}=\sup_{j\in\mathbb{Z}}\left(\int_{\mathbb{R}}|f|^p\mathbf{1}_{A_j}\mathrm{d}\mathbb{P}\right)^{\frac{1}{p}}$$ for $0<p<\infty$. It is interesting to note that $L_{p,p}(\mathbb{R})=L_p(\mathbb{R}).$ We refer the interested reader to (\cite{Justin, Fournier, Heil, Holland}) for more on amalgam spaces. 

Let $\mathcal{M}$ be the spaces of all martingales defined on $\mathbf{Ps}$ relative to the underlying filtration $\mathcal{F}_n.$ For $f=(f_n)_{n\in\mathbb{N}}\in\mathcal{M},$ we define the martingale difference as $d_nf = f_n - f_{n-1}$ and we will agree that $f_0=0$ and $d_0f=0.$ Let $\|\cdot\|_p$ denote the usual $L_p-$norm for $0<p\le\infty.$ Then $f=(f_n)_{n\in\mathbb{N}}\in\mathcal{M}$ is said to be $L_p$ bounded if $$\|f\|_p:=\sup_{n\in\mathbb{N}}\|f_n\|_p<\infty$$ and we define the maximal function, $f^*,$ or $M(f)$ of $f$ as $$M(f) = f^*:=\sup_{n\in\mathbb{N}}|f_n|.$$ Let $\mathbb{E}$ and $\mathbb{E}_n$ be the expectation and the conditional expectation operators respectively. Then the following measurable functions are well defined (see for example \cite{BurkGund});  $$S(f) = \left(\sum_{n\in\mathbb{N}}|d_nf|^2 \right)^{\frac{1}{2}}\quad\mbox{and}\quad s(f) = \left(\sum_{n\in\mathbb{N}}\mathbb{E}_{n-1}|d_nf|^2 \right)^{\frac{1}{2}}$$ and we shall agree that $$S_n(f) = \left(\sum_{i=0}^n|d_if|^2 \right)^{\frac{1}{2}}\quad\mbox{and}\quad s_n(f) = \left(\sum_{i=0}^n\mathbb{E}_{i-1}|d_if|^2 \right)^{\frac{1}{2}}.$$ Let $\rho$ be the space of all sequences $\varrho=(\varrho_n)_{n\ge0}$ of adapted (that is for all $n\in\mathbb{Z},\,\,\varrho_n$ is $\mathcal{F}_n$-measurable), non-decreasing, non-negative functions and define $$\varrho_{\infty} := \lim_{n\to\infty}\varrho_n.$$ We are now in the position to define the martingale Hardy-amalgam spaces. These spaces were originally introduce in \cite{BanSeh}. Let $0<p,q\le \infty$. Then 
\begin{itemize}
\item[i.]$H^S_{p,q}(\mathbb{R})$  is the space of all $f \in\mathcal{M}$ such that $S(f)\in L_{p,q}(\mathbb{R})$ with (quasi)-norm $$\|f\|_{H^S_{p,q}(\mathbb{R})} := \|S(f)\|_{L_{p,q}(\mathbb{R})}.$$
\item[ii.]\label{Hs}$H^s_{p,q}(\mathbb{R})$ is the space of all $f \in\mathcal{M}$ such that $s(f)\in L_{p,q}(\mathbb{R})$ with (quasi)-norm $$\|f\|_{H^s_{p,q}(\mathbb{R})} := \|s(f)\|_{L_{p,q}(\mathbb{R})}.$$
\item[iii.]$H^*_{p,q}(\mathbb{R})$ is the space of all $f \in\mathcal{M}$ such that $f^*\in L_{p,q}(\mathbb{R})$ with (quasi)-norm $$\|f\|_{H^*_{p,q}(\mathbb{R})} := \|f^*\|_{L_{p,q}(\mathbb{R})}.$$
\item[iv.]$\mathcal{Q}_{p,q}(\mathbb{R})$ is the space of all $f\in\mathcal{M}$ for which there is a
sequence of functions $\varrho=(\varrho_n)_{n\ge 0}\in \rho$ such that $S_n(f)\le\varrho_{n-1}$ and $\|\varrho_{\infty}\|_{L_{p,q}(\mathbb{R})}<\infty$ with (quasi)-norm $$\|f\|_{\mathcal{Q}_{p,q}(\mathbb{R})}:=\inf_{\varrho\in\rho}\|\varrho_{\infty}\|_{L_{p,q}(\mathbb{R})}.$$
\item[v.]$\mathcal{P}_{p,q}(\mathbb{R})$ is the space of all $f\in\mathcal{M}$ for which there is a
sequence of functions $\varrho=(\varrho_n)_{n\ge 0}\in \rho$ such that $|f_n|\le\varrho_{n-1}$ and $\|\varrho_{\infty}\|_{L_{p,q}(\mathbb{R})}<\infty$ with (quasi)-norm $$\|f\|_{\mathcal{P}_{p,q}(\mathbb{R})}:=\inf_{\varrho\in\rho}\|\varrho_{\infty}\|_{L_{p,q}(\mathbb{R})}.$$
\end{itemize}
We also introduce here the spaces $\mathcal{G}_{p,q}(\mathbb{R})$ and $\mathcal{BD}_{p,q}(\mathbb{R})$ which we shall refer to as variation integrable space and Jump bounded space respectively.  $$\mathcal{G}_{p,q}(\mathbb{R}):=\left\{f\in\mathcal{M}:\sum_{n=0}^{\infty} |d_nf| \in L_{p,q}(\mathbb{R})\right\}$$ endowed with the norm $$\|f\|_{\mathcal{G}_{p,q}(\mathbb{R})}=\left\|\sum_{n=0}^{\infty}|d_nf|\right\|_{L_{p,q}(\mathbb{R})}$$ for $1\le p\le q<\infty$ and $$\mathcal{BD}_{p,q}(\mathbb{R}):=\left\{f\in\mathcal{M}:\sup_{n\in\mathbb{N}}|d_nf|\in L_{p,q}(\mathbb{R})\right\}$$ endowed with the norm $$\|f\|_{\mathcal{BD}_{p,q}(\mathbb{R})}=\left\|\sup_{n\in\mathbb{N}}|d_nf|\right\|_{L_{p,q}(\mathbb{R})}$$ for $1\le p\le q\le\infty.$

For the sake of presentation, we sometimes write $H^S_{p,q}(\mathbb{R}),H^s_{p,q}(\mathbb{R}),H^*_{p,q}(\mathbb{R}),$ $\mathcal{Q}_{p,q}(\mathbb{R}),\mathcal{P}_{p,q}(\mathbb{R}), \mathcal{G}_{p,q}(\mathbb{R}), \mathcal{BD}_{p,q}(\mathbb{R})$ as $H^S_{p,q},H^s_{p,q},H^*_{p,q}$, $\mathcal{Q}_{p,q}, \mathcal{P}_{p,q}, \mathcal{G}_{p,q}, \mathcal{BD}_{p,q}$ respectively and $\|\cdot\|_{L_{p,q}(\mathbb{R})}$ as $\|\cdot\|_{p,q}.$
\vskip .1cm
We say that the stochastic basis, $\mathcal{F}_n,$ is regular if there exists $R>0$ such that $f_n\le Rf_{n-1}.$

\section{Presentation of Results}\label{sec3}
In this section, we present the main results of this paper. We start with the presentation of the inequalities that relate the first five spaces defined above. To be specific, we have the following theorem.
\begin{theorem}[Martingale Embeddings]\label{thm:mt1} Let $0<q\leq\infty$. Then
\begin{itemize}
\item[(i)] $\|f\|_{H^*_{p,q}(\mathbb{R})}\le C\|f\|_{H^s_{p,q}(\mathbb{R})},\quad \|f\|_{H^S_{p,q}(\mathbb{R})}\le C\|f\|_{H^s_{p,q}(\mathbb{R})}\qquad (0<p\le 2).$
\item[(ii)] $\|f\|_{H^s_{p,q}(\mathbb{R})}\le C\|f\|_{H^*_{p,q}(\mathbb{R})},\quad \|f\|_{H^s_{p,q}(\mathbb{R})}\le C\|f\|_{H^S_{p,q}(\mathbb{R})}\qquad (2\le p<\infty).$
\item[(iii)] $\|f\|_{H^*_{p,q}(\mathbb{R})}\le C\|f\|_{\mathcal{P}_{p,q}(\mathbb{R})},\quad \|f\|_{H^S_{p,q}(\mathbb{R})}\le C\|f\|_{\mathcal{Q}_{p,q}(\mathbb{R})}\qquad (0<p<\infty).$
\item[(iv)] $\|f\|_{H^*_{p,q}(\mathbb{R})}\le C\|f\|_{\mathcal{Q}_{p,q}(\mathbb{R})},\quad \|f\|_{H^S_{p,q}(\mathbb{R})}\le C\|f\|_{\mathcal{P}_{p,q}(\mathbb{R})}\qquad (0<p<\infty).$
\item[(v)] $\|f\|_{H^s_{p,q}(\mathbb{R})}\le C\|f\|_{\mathcal{P}_{p,q}(\mathbb{R})},\quad \|f\|_{H^s_{p,q}(\mathbb{R})}\le C\|f\|_{\mathcal{Q}_{p,q}(\mathbb{R})}\qquad (0<p<\infty).$
\end{itemize}
Moreover, if $(\mathcal{F}_n)_{n\geq 0}$ is regular, then $H^S_{p,q}(\mathbb{R}),H^s_{p,q}(\mathbb{R}),H^*_{p,q}(\mathbb{R})$,$\mathcal{Q}_{p,q}(\mathbb{R})$ and $\mathcal{P}_{p,q}(\mathbb{R})$ are all equivalent.
\end{theorem}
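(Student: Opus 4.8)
The plan is to reduce every inequality to its classical martingale-theoretic counterpart by \emph{localizing} on the unit intervals $A_j=[j,j+1)$. The crucial observation, which is the content of Lemma~\ref{lem:idmartingale}, is that since $A_j\in\mathcal F_n$ for every $n$, multiplication by $\mathbf{1}_{A_j}$ commutes with the martingale structure: for $f=(f_n)\in\mathcal M$ the sequence $f^{(j)}:=(f_n\mathbf{1}_{A_j})_n$ is again a martingale for the same filtration, with $d_nf^{(j)}=(d_nf)\,\mathbf{1}_{A_j}$, and hence
$$S(f^{(j)})=S(f)\,\mathbf{1}_{A_j},\qquad s(f^{(j)})=s(f)\,\mathbf{1}_{A_j},\qquad (f^{(j)})^{*}=f^{*}\,\mathbf{1}_{A_j}.$$
Likewise, if $\varrho=(\varrho_n)\in\rho$ is a majorant witnessing $f\in\mathcal Q_{p,q}$ (resp. $f\in\mathcal P_{p,q}$), then $(\varrho_n\mathbf{1}_{A_j})_n\in\rho$ is a majorant for $f^{(j)}$ in the corresponding classical sense; conversely, given majorants $\varrho^{(j)}$ for the $f^{(j)}$, the disjointly supported ``glued'' sequence $\varrho_n:=\sum_{j}\varrho_n^{(j)}\mathbf{1}_{A_j}$ lies in $\rho$ and majorizes $f$, with $\varrho_\infty=\sum_{j}\varrho_\infty^{(j)}\mathbf{1}_{A_j}$.

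Next I would unwind the amalgam quasi-norm. Since $\|g\|_{L_{p,q}(\mathbb R)}=\big(\sum_{j}\|g\,\mathbf{1}_{A_j}\|_p^q\big)^{1/q}$ (the supremum over $j$ when $q=\infty$), the displayed identities give, for $X\in\{H^S,H^s,H^{*}\}$, the exact formula
$$\|f\|_{X_{p,q}(\mathbb R)}=\Big(\sum_{j\in\mathbb Z}\|f^{(j)}\|_{X_p}^q\Big)^{1/q},$$
where $X_p$ is the corresponding classical martingale Hardy space, and for $\mathcal Q$ and $\mathcal P$ the two-sided comparison
$$\Big(\sum_{j}\|f^{(j)}\|_{\mathcal Q_p}^q\Big)^{1/q}\;\le\;\|f\|_{\mathcal Q_{p,q}(\mathbb R)}\;\le\;2\Big(\sum_{j}\|f^{(j)}\|_{\mathcal Q_p}^q\Big)^{1/q}$$
(and similarly for $\mathcal P$), the left-hand estimate coming from localizing a global majorant and the right-hand one from the gluing step applied to near-optimal $\varrho^{(j)}$; the factor $2$ is harmless and uniform in $p$ and $q$.

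With these identities in hand, (i)--(v) follow at once from the classical embeddings of \cite[Theorem~2.11]{Ferenc}: each of those is an inequality $\|g\|_{A_p}\le C_p\|g\|_{B_p}$ with $C_p$ depending only on $p$ (not on the martingale $g$ nor on the probability space), so applying it to $g=f^{(j)}$ for every $j$ and taking $\ell_q$-norms (or suprema) in $j$ reproduces the stated amalgam inequality with the same $C_p$, up to the harmless factor in the $\mathcal Q,\mathcal P$ cases. For the last assertion, the classical theorem that a regular filtration forces $H^S_p,H^s_p,H^{*}_p,\mathcal Q_p,\mathcal P_p$ to be pairwise equivalent — with constants depending only on $p$ and the regularity constant $R$, which is unchanged upon localization — is transported in exactly the same way.

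I expect the only genuinely delicate point to be the $\mathcal Q_{p,q}$ and $\mathcal P_{p,q}$ side of the localization, namely the gluing construction: one must check that $\varrho_n=\sum_{j}\varrho_n^{(j)}\mathbf{1}_{A_j}$ is still adapted, non-decreasing and non-negative (it is, because the $A_j$ partition $\mathbb R$ and each lies in every $\mathcal F_n$), and that the $\varrho^{(j)}$ can be chosen simultaneously for all $j$ with a uniformly controlled loss, so that only a multiplicative constant — not an additive $\varepsilon$ summed over $j$ — is incurred. Everything else is routine bookkeeping with the amalgam quasi-norm and the classical inequalities.
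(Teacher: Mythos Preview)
Your proposal is correct and follows essentially the same route as the paper: localize via Lemma~\ref{lem:idmartingale} to write each amalgam norm as an $\ell_q$-sum of classical $H_p$-norms over the blocks $A_j$, then invoke Proposition~\ref{prop:weisz}. The only minor divergence is in the regular case: the paper builds the explicit majorant $\varrho_n=[C_p(S_n^p(f)+\mathbb{E}_n S_{n+1}^p(f))]^{1/p}$ and controls it blockwise via Lemma~\ref{lem:wesz220}, whereas your gluing construction yields the two-sided comparison $\|f\|_{\mathcal Q_{p,q}}\approx\big(\sum_j\|f\mathbf{1}_{A_j}\|_{\mathcal Q_p}^q\big)^{1/q}$ (and likewise for $\mathcal P$) and then simply transports the classical equivalence through it --- a slightly more economical variant that bypasses Lemma~\ref{lem:wesz220} entirely.
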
 
A proof for the above theorem is provided in Section \ref{sec4} below. In this same Section \ref{sec4}, we prove the following extensions of Doob's inequality and Burkholder-Davis-Gundy inequality respectively.
\begin{theorem}\label{doobs}
Let $0<q\le\infty$ and $1<p<\infty.$ For every non-negative $L_{p,q}-$bounded submartingale $(f_n,\,\,n\in\mathbb{N}),$ we have that \begin{equation}\label{doob1}\left\|\sup_{n\in\mathbb{N}}f_n\right\|_{p,q}\le\frac{p}{p-1}\sup_{n\in\mathbb{N}}\|f_n\|_{p,q}.\end{equation}
\end{theorem}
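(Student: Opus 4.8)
The plan is to deduce the amalgam-space Doob inequality from the classical $L_p$ Doob maximal inequality applied locally, on each unit interval $A_j$, and then assemble the pieces in $\ell_q$. First I would recall the classical Doob inequality: for a non-negative $L_p$-bounded submartingale $(f_n)$ with $1<p<\infty$, one has $\|\sup_n f_n\|_{L_p}\le \frac{p}{p-1}\sup_n\|f_n\|_{L_p}$. The key structural point, coming from Lemma \ref{lem:idmartingale} and the fact that each $A_j=[j,j+1)$ lies in every $\mathcal{F}_n$ (equation \eqref{dia1}), is that multiplication by $\mathbf{1}_{A_j}$ commutes with the conditional expectations $\mathbb{E}_n$; hence $(f_n\mathbf{1}_{A_j})_{n\in\mathbb{N}}$ is again a non-negative submartingale with respect to the same filtration, and $\sup_n(f_n\mathbf{1}_{A_j}) = (\sup_n f_n)\mathbf{1}_{A_j}$ since the $f_n$ are non-negative.

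The second step is quantitative. Fix $j$ and apply the classical Doob inequality to the submartingale $(f_n\mathbf{1}_{A_j})$:
\begin{equation}\label{doobloc}
\left\|\Bigl(\sup_{n\in\mathbb{N}}f_n\Bigr)\mathbf{1}_{A_j}\right\|_{p}\le \frac{p}{p-1}\,\sup_{n\in\mathbb{N}}\left\|f_n\mathbf{1}_{A_j}\right\|_{p}.
\end{equation}
Now I would raise \eqref{doobloc} to the power $q$ (assuming $q<\infty$; the case $q=\infty$ is handled by replacing the $\ell_q$ sum with a supremum over $j$), sum over $j\in\mathbb{Z}$, and take the $q$-th root. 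On the left this produces exactly $\|\sup_n f_n\|_{p,q}$ by the definition of the amalgam norm. For the right-hand side I would need to interchange the supremum over $n$ with the $\ell_q$ sum over $j$: since $\bigl(\sum_j (\sup_n \|f_n\mathbf{1}_{A_j}\|_p)^q\bigr)^{1/q}\ge \sup_n \bigl(\sum_j \|f_n\mathbf{1}_{A_j}\|_p^q\bigr)^{1/q} = \sup_n\|f_n\|_{p,q}$, the inequality goes the right way, yielding
\begin{equation}\label{doobconcl}
\left\|\sup_{n\in\mathbb{N}}f_n\right\|_{p,q}\le \frac{p}{p-1}\left(\sum_{j\in\mathbb{Z}}\Bigl(\sup_{n\in\mathbb{N}}\|f_n\mathbf{1}_{A_j}\|_p\Bigr)^q\right)^{1/q}.
\end{equation}

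The main obstacle is precisely the last inequality in \eqref{doobconcl}: a priori the bound we want, $\sup_n\|f_n\|_{p,q}$, is the expression with the sup \emph{inside} the $\ell_q$ sum, whereas \eqref{doobconcl} has the sup \emph{outside}. To close the gap I would use the submartingale structure once more: for a non-negative submartingale, $\|f_n\mathbf{1}_{A_j}\|_p$ is non-decreasing in $n$ (because $\mathbb{E}_{n-1} f_n \ge f_{n-1}\ge 0$ and $\mathbf{1}_{A_j}$ is $\mathcal{F}_{n-1}$-measurable, so $\|f_{n-1}\mathbf{1}_{A_j}\|_p \le \|\mathbb{E}_{n-1}(f_n\mathbf{1}_{A_j})\|_p\le \|f_n\mathbf{1}_{A_j}\|_p$ by the conditional Jensen/contraction property of $\mathbb{E}_{n-1}$ on $L_p$). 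Consequently $\sup_n\|f_n\mathbf{1}_{A_j}\|_p=\lim_n\|f_n\mathbf{1}_{A_j}\|_p$, and by monotone convergence $\bigl(\sum_j (\sup_n\|f_n\mathbf{1}_{A_j}\|_p)^q\bigr)^{1/q} = \lim_n \bigl(\sum_j \|f_n\mathbf{1}_{A_j}\|_p^q\bigr)^{1/q}=\lim_n\|f_n\|_{p,q}=\sup_n\|f_n\|_{p,q}$. Plugging this into \eqref{doobconcl} gives \eqref{doob1}. I would also remark that if $(f_n)$ is instead a martingale, one applies the above to the non-negative submartingale $(|f_n|)$ to recover the usual maximal inequality; and for $q=\infty$ one runs the identical argument with $\sup_{j}$ replacing the $\ell_q$-sum throughout, the monotonicity step being unchanged.
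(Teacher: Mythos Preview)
Your proof is correct and follows essentially the same route as the paper: localize to each $A_j$ using that $A_j\in\mathcal{F}_n$ for all $n$, apply the classical $L_p$ Doob inequality to the non-negative submartingale $(f_n\mathbf{1}_{A_j})_n$, and then reassemble in $\ell_q$. You are in fact more careful than the paper at the one delicate point: the paper simply writes $\sum_j \sup_n \|f_n\mathbf{1}_{A_j}\|_p^q = \sup_n \sum_j \|f_n\mathbf{1}_{A_j}\|_p^q$ without comment, whereas you justify this interchange via the monotonicity of $n\mapsto\|f_n\mathbf{1}_{A_j}\|_p$ coming from the submartingale property and the $L_p$-contractivity of conditional expectation.
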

\begin{theorem}\label{BDGamalgam}
The spaces $H_{p,q}^S(\mathbb{R})$ and $H_{p,q}^*(\mathbb{R})$ are equivalent for $1\le p,q\le\infty$, namely,
$$c_{p}\|f\|_{H_{p,q}^S(\mathbb{R})}\le \|f\|_{H_{p,q}^*(\mathbb{R})}\le C_{p}\|f\|_{H_{p,q}^S(\mathbb{R})}\quad (1\le p,q<\infty)$$
and 
$$c_{p}\|f\|_{H_{p,\infty}^S(\mathbb{R})}\le \|f\|_{H_{p,\infty}^*(\mathbb{R})}\le C_{p}\|f\|_{H_{p,\infty}^S(\mathbb{R})}\quad (1\le p<\infty).$$
\end{theorem}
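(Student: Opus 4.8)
The plan is to reduce the statement to the classical Burkholder--Davis--Gundy inequality applied separately on each block $A_j$, the bridge being Lemma \ref{lem:idmartingale}. Recall that classically (see, e.g., \cite{BurkGund,Ferenc}) there are constants $0<c_p\le C_p<\infty$ depending only on $p\in[1,\infty)$ such that $c_p\|S(g)\|_p\le\|g^*\|_p\le C_p\|S(g)\|_p$ for every martingale $g$, with the convention that the two sides may simultaneously be infinite (the delicate endpoint $p=1$ being handled there by the Davis decomposition). I would invoke this together with the observation that restricting a martingale to one of the blocks $A_j$ leaves its martingale structure intact.

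Concretely, first I would record what Lemma \ref{lem:idmartingale} supplies here: since $A_j=[j,j+1)\in\mathcal{F}_n$ for every $n$, for each fixed $j\in\mathbb{Z}$ the sequence $f^{(j)}:=(f_n\mathbf{1}_{A_j})_{n\in\mathbb{N}}$ is again a martingale for the filtration $(\mathcal{F}_n)_{n}$, with differences $d_nf^{(j)}=(d_nf)\mathbf{1}_{A_j}$. Hence, pointwise almost everywhere, $S(f^{(j)})=S(f)\mathbf{1}_{A_j}$ and $(f^{(j)})^*=f^*\mathbf{1}_{A_j}$, so that $\|S(f)\mathbf{1}_{A_j}\|_p=\|S(f^{(j)})\|_p$ and $\|f^*\mathbf{1}_{A_j}\|_p=\|(f^{(j)})^*\|_p$.

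Applying the classical BDG inequality to each $g=f^{(j)}$ then gives, with $c_p,C_p$ independent of $j$,
\begin{equation*}
c_p\bigl\|S(f)\mathbf{1}_{A_j}\bigr\|_p\le\bigl\|f^*\mathbf{1}_{A_j}\bigr\|_p\le C_p\bigl\|S(f)\mathbf{1}_{A_j}\bigr\|_p\qquad(j\in\mathbb{Z}).
\end{equation*}
For $1\le q<\infty$ I would raise these to the power $q$, sum over $j$, take $q$-th roots, and use the elementary identity $\|h\|_{L_{p,q}(\mathbb{R})}^q=\sum_{j\in\mathbb{Z}}\|h\mathbf{1}_{A_j}\|_p^q$ (applied to $h=S(f)$ and to $h=f^*$) to obtain $c_p\|f\|_{H^S_{p,q}(\mathbb{R})}\le\|f\|_{H^*_{p,q}(\mathbb{R})}\le C_p\|f\|_{H^S_{p,q}(\mathbb{R})}$. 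For $q=\infty$ I would instead take the supremum over $j$ of the same blockwise inequalities, which yields the claimed two-sided bound for the $L_{p,\infty}$-norm.

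Once Lemma \ref{lem:idmartingale} is in hand there is essentially no obstacle: the only points needing a word of care are that the constants $c_p,C_p$ are precisely the classical BDG constants and hence uniform in $j$, and that the ``both sides may be infinite'' convention frees us from assuming any integrability of $f$ a priori. All the genuinely hard content --- in particular the Davis decomposition behind the endpoint $p=1$ --- is imported wholesale from the classical martingale theory.
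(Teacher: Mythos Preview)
Your proof is correct and follows exactly the approach of the paper: reduce to the classical Burkholder--Davis--Gundy inequality on each block $A_j$ via Lemma~\ref{lem:idmartingale}, then sum (or take the supremum) over $j$. The paper in fact states the argument in a single sentence, so your write-up is simply a more detailed rendering of the same proof.
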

In Section \ref{sec5}, we provide the proof of the following dual characterizations of the spaces $H^s_{p,q}(\mathbb{R})$ and $\mathcal{G}_{p,q}(\mathbb{R}).$ 
\begin{theorem}\label{thm:dualitygrand}
If either $1< q\le p\le 2$ or $2\le p\le q<\infty$, then the dual space of $H^s_{p,q}(\mathbb{R})$ identifies with $H^s_{p',q'}(\mathbb{R})$ where $\frac{1}{p}+\frac{1}{p'}=\frac{1}{q}+\frac{1}{q'}=1.$
\end{theorem}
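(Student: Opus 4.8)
The plan is to adapt the classical duality argument for conditioned martingale Hardy spaces $H^s_p$ (whose dual is $H^s_{p'}$ for $1<p<\infty$) to the amalgam setting, using the quadratic-variation structure to set up a bilinear pairing and then invoking an amalgam-space Hölder inequality together with the Doob-type estimate of Theorem \ref{doobs}. First I would fix a dense subclass of $H^s_{p,q}(\mathbb{R})$ — for instance the martingales with finitely many nonzero differences, or those in $H^s_{p,q}\cap L_2$ — so that all the manipulations below are justified, and observe that on this subclass the natural candidate for the duality pairing is $\langle f,g\rangle=\mathbb{E}\left(\sum_{n}d_nf\,\overline{d_ng}\right)=\lim_{N}\mathbb{E}(f_N\overline{g_N})$, which makes sense because each side is a finite sum. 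For $g\in H^s_{p',q'}(\mathbb{R})$ one then shows this functional is bounded on $H^s_{p,q}(\mathbb{R})$; the mechanism is the pointwise Cauchy–Schwarz inequality $\sum_n\mathbb{E}_{n-1}|d_nf\,d_ng|\le s(f)s(g)$ combined with a summation-by-parts / stopping-time decomposition that converts $\sum_n\mathbb{E}_{n-1}(d_nf\,\overline{d_ng})$ into an integral of $s(f)^2$ against an increasing process controlled by $s(g)$. Then the amalgam Hölder inequality $\|uv\|_{1,1}=\|uv\|_{L_1}\le\|u\|_{p,q}\|v\|_{p',q'}$ (which follows from the scalar Hölder inequality applied twice, once in $L_p$ on each $A_j$ and once in $\ell_q$ over $j$) yields $|\langle f,g\rangle|\le C\|f\|_{H^s_{p,q}}\|g\|_{H^s_{p',q'}}$.

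For the converse — that every bounded linear functional $\Phi$ on $H^s_{p,q}(\mathbb{R})$ arises from some $g\in H^s_{p',q'}(\mathbb{R})$ — I would use the standard embedding of $H^s_{p,q}(\mathbb{R})$ into a weighted $\ell_2$-valued $L_{p,q}$ space: the map $f\mapsto(\mathbb{E}_{n-1}|d_nf|^2)^{1/2}$-normalized differences $(d_nf)_n$ realizes $H^s_{p,q}(\mathbb{R})$ as a closed subspace of $L_{p,q}(\mathbb{R};\ell_2)$ (or of the mixed-norm space built from the conditioned square function). By Hahn–Banach, $\Phi$ extends to this larger space, whose dual we identify with $L_{p',q'}(\mathbb{R};\ell_2)$ — here is where we need $1<p,q<\infty$, so that both $L_{p,q}$ and its $\ell_2$-valued analogue are reflexive-type spaces with the expected Köthe dual, and where the two exponent regimes $1<q\le p\le 2$ and $2\le p\le q<\infty$ enter to guarantee the relevant lattice/interpolation embeddings go the right way. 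The extended functional is represented by a sequence $(\psi_n)_n\in L_{p',q'}(\mathbb{R};\ell_2)$; one then projects back, setting $d_ng:=\mathbb{E}_n\psi_n-\mathbb{E}_{n-1}\psi_n$ or an appropriate conditioned version, checks that $g=(g_n)$ with $g_n=\sum_{k\le n}d_kg$ is a martingale, that $\Phi(f)=\langle f,g\rangle$ on the dense subclass, and finally that $\|g\|_{H^s_{p',q'}}\le C\|\Phi\|$ — this last bound being exactly where Doob's inequality in amalgam form (Theorem \ref{doobs}) is used to control the conditional expectations appearing in the projection.

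The main obstacle I anticipate is the identification of the dual of the vector-valued amalgam space $L_{p,q}(\mathbb{R};\ell_2)$ and, more delicately, the verification that the Hahn–Banach extension can be \emph{projected} back onto the martingale subspace without losing control of the norm — in the classical $H^s_p$ theory this is handled by a concrete construction of $g$ from the representing sequence together with the boundedness of the conditional expectation operators on $L_p$, and the amalgam analogue requires the conditional expectations $\mathbb{E}_n$ to be uniformly bounded on $L_{p,q}(\mathbb{R})$. Since the $A_j$'s are $\mathcal{F}_n$-measurable for every $n$ (equation \eqref{dia1}), the operator $\mathbb{E}_n$ commutes with multiplication by $\mathbf{1}_{A_j}$, so its $L_{p,q}$-boundedness reduces to its $L_p$-boundedness on each block — this is precisely the kind of reduction that Lemma \ref{lem:idmartingale} is advertised to provide, and Theorem \ref{doobs} packages the maximal version. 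The restriction to $1<q\le p\le 2$ or $2\le p\le q<\infty$ (rather than all $1<p,q<\infty$) I expect to be forced by the step comparing the $\ell_2$ square function with the amalgam norm: one needs a Minkowski-type inequality between $\ell_2$ and $L_{p,q}$ that holds only when the exponents are ordered consistently with $2$, mirroring exactly the role of the hypothesis $p\le 2$ versus $p\ge 2$ in parts (i) and (ii) of Theorem \ref{thm:mt1}.
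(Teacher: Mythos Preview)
Your easy direction is fine and matches the paper's: conditional Cauchy--Schwarz gives $\sum_n\mathbb{E}_{n-1}|d_nf\,d_ng|\le s(f)s(g)$, and then H\"older twice (in $L_p$ on each $A_j$, then in $\ell_q$ over $j$) gives the bound; no stopping-time decomposition is needed.

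The converse, however, has a real gap. The linear map $f\mapsto(d_nf)_n$ lands in $L_{p,q}(\mathbb{R};\ell_2)$ with norm $\|S(f)\|_{p,q}$, not $\|s(f)\|_{p,q}$: the conditioned square function $s(f)=\big(\sum_n\mathbb{E}_{n-1}|d_nf|^2\big)^{1/2}$ is \emph{not} the $\ell_2$-norm of a linear image of $f$, so $H^s_{p,q}$ does not sit isometrically inside $L_{p,q}(\mathbb{R};\ell_2)$ in the way you describe. Your parenthetical ``mixed-norm space built from the conditioned square function'' names the difficulty rather than resolving it --- identifying the dual of that space is the original problem. And even granting a representing sequence $(\psi_n)$, the projection $d_ng=\mathbb{E}_n\psi_n-\mathbb{E}_{n-1}\psi_n$ gives no direct handle on $s(g)$: Doob's inequality (Theorem~\ref{doobs}) controls maximal functions, not conditioned square functions, so the step ``this last bound being exactly where Doob's inequality \ldots\ is used'' does not go through here. (In the paper, Doob in amalgam form is used for the $\mathcal{G}_{p,q}$ duality, not for this theorem.)

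The paper's route is different on both the reduction and the construction. First, it proves that $H^s_{p,q}$ is uniformly convex when $2\le p\le q<\infty$ (via the parallelogram-type identity $s^2(f+g)+s^2(f-g)=2(s^2(f)+s^2(g))$ and two applications of $a^r+b^r\le(a+b)^r$), hence reflexive; this disposes of that exponent range and reduces everything to $1<q\le p\le 2$. For that range, the Hahn--Banach extension is taken not into a vector-valued space but into $H^s_p$ itself (using the embedding $H^s_{p,q}\hookrightarrow H^s_p$ valid because $q\le p$), and the classical duality $(H^s_p)^*=H^s_{p'}$ produces $g$. The norm estimate $\|g\|_{H^s_{p',q'}}\lesssim\|\kappa\|$ is then obtained by testing $\kappa$ on an \emph{explicit} martingale transform $d_nh=\mu_n\,d_ng$ with predictable multiplier
\[
\mu_n=\sum_j\frac{s_n^{p'-2}(g)\,\mathbf{1}_{A_j}}{\|s(g)\|_{p',q'}^{q'-1}\,\|s(g)\mathbf{1}_{A_j}\|_{p'}^{p'-q'}},
\]
checking $\|h\|_{H^s_{p,q}}\le 1$ by direct computation, and bounding $|\kappa(h)|$ from below via the telescoping $\mathbb{E}_{n-1}|d_ng|^2=s_n^2(g)-s_{n-1}^2(g)$ together with the elementary inequality $x^{p'-2}(x^2-y^2)\ge\tfrac{2}{p'}(x^{p'}-y^{p'})$ for $0\le y\le x$ and $p'\ge 2$. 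That convexity inequality is precisely where $p\le 2$ enters; the condition $q\le p$ is what permits the Hahn--Banach step. So the exponent restrictions arise from uniform convexity on one side and from this Abel-summation lower bound on the other, not from a Minkowski swap between $\ell_2$ and $L_{p,q}$.
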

\begin{theorem}\label{Garcia}
Let $1< p, q<\infty$. 
Then the dual space of $\mathcal{G}_{p,q}(\mathbb{R})$ is $\mathcal{BD}_{p',q'}(\mathbb{R})$ where $\frac{1}{p}+\frac{1}{p'}=1$ and $\frac{1}{q}+\frac{1}{q'}=1$
\end{theorem}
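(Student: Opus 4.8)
The claim is that $(\mathcal{G}_{p,q})^* = \mathcal{BD}_{p',q'}$ for $1<p,q<\infty$, generalizing the classical fact $(\mathcal{G}_p)^* = \mathcal{BD}_p$ (found in \cite{Long,Ferenc}). I would set this up as a pairing argument in two directions: first show every $g \in \mathcal{BD}_{p',q'}$ induces a bounded functional on $\mathcal{G}_{p,q}$, then show every bounded functional on $\mathcal{G}_{p,q}$ arises this way.

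\textbf{Step 1: $\mathcal{BD}_{p',q'} \hookrightarrow (\mathcal{G}_{p,q})^*$.}
For $f \in \mathcal{G}_{p,q}$ and $g \in \mathcal{BD}_{p',q'}$, the natural pairing is
$\langle f, g \rangle = \sum_{n} \mathbb{E}(d_n f \, d_n g)$
(this is the martingale pairing $\mathbb{E}(f_\infty g_\infty)$ when things converge). The estimate to prove is
$|\langle f,g\rangle| \le \sum_n \mathbb{E}(|d_n f|\,|d_n g|) \le \mathbb{E}\Big( \big(\sum_n |d_n f|\big)\, \sup_n |d_n g| \Big).$
Now apply a Hölder inequality \emph{on the amalgam space} $L_{p,q}$: namely $\|uv\|_{1,1} = \mathbb{E}(|uv|) \le \|u\|_{p,q}\|v\|_{p',q'}$, which follows by applying ordinary Hölder with exponents $(p,p')$ on each interval $A_j$ and then Hölder for sequences with exponents $(q/1 \text{ scaled})$... precisely: $\sum_j \|uv\mathbf{1}_{A_j}\|_1 \le \sum_j \|u\mathbf{1}_{A_j}\|_p \|v\mathbf{1}_{A_j}\|_{p'} \le \big(\sum_j \|u\mathbf{1}_{A_j}\|_p^q\big)^{1/q}\big(\sum_j \|v\mathbf{1}_{A_j}\|_{p'}^{q'}\big)^{1/q'}$. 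With $u = \sum_n |d_n f|$ and $v = \sup_n |d_n g|$ this gives $|\langle f,g\rangle| \le \|f\|_{\mathcal{G}_{p,q}} \|g\|_{\mathcal{BD}_{p',q'}}$, so $\ell_g := \langle \cdot, g\rangle$ is bounded with $\|\ell_g\| \le \|g\|_{\mathcal{BD}_{p',q'}}$.

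\textbf{Step 2: $(\mathcal{G}_{p,q})^* \hookrightarrow \mathcal{BD}_{p',q'}$.}
Given a bounded functional $\ell$ on $\mathcal{G}_{p,q}$, I must produce $g \in \mathcal{BD}_{p',q'}$ with $\ell = \ell_g$ and $\|g\|_{\mathcal{BD}_{p',q'}} \le C\|\ell\|$. The construction is standard: for a fixed dyadic atom (an indicator $\mathbf{1}_{I}$ of a dyadic interval $I \in \mathcal{D}_n$, suitably normalized), the martingale $\mathbf{1}_I - \mathbb{E}_{n-1}\mathbf{1}_I$ lies in $\mathcal{G}_{p,q}$; testing $\ell$ against such building blocks (and against $\mathbb{E}_n(\chi_I)$-type increments) defines the martingale differences $d_n g$ of the representing martingale $g$, equivalently one recovers $g_n$ via $g_n = \sum_{I \in \mathcal{D}_n} \ell(\text{normalized atom on }I) \cdot (\text{dual atom})$. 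One then shows $d_n g \in L_{p'}$ locally and estimates $\sup_n |d_n g|$: the key inequality $\sup_n \|d_n g \mathbf{1}_{A_j}\|_{p'} \le C \|\ell\|$-type bounds come from duality of $L_{p'}$ against $L_p$ applied on each $A_j$, combined with a sequence-space duality over $j$ to control $\big(\sum_j \|\sup_n|d_n g|\mathbf{1}_{A_j}\|_{p'}^{q'}\big)^{1/q'}$. This uses that $\mathcal{D}_n$-measurable functions supported on $A_j$ can be fed into $\ell$ as genuine $\mathcal{G}_{p,q}$-elements with controlled norm, which is where the dyadic structure \eqref{dia1} (each $A_j$ being a union of $2^n$ dyadic intervals from $\mathcal{D}_n$, hence $A_j \in \mathcal{F}_n$) is essential.

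\textbf{Main obstacle.}
The delicate point is Step 2: isolating the correct representing martingale and proving the \emph{reverse} amalgam–Hölder estimate that upgrades local $L_{p'}$-control on each $A_j$ to a global $\mathcal{BD}_{p',q'}$ bound. In the classical ($q = p$) case one just invokes $L_p$–$L_{p'}$ duality directly; here the two-layer structure (integration over $A_j$, summation over $j$) forces a two-step duality, and one must check that the intermediate quantities are genuinely finite and that passing to the supremum over $n$ is legitimate — this likely requires a density/truncation argument restricting first to finitely-generated martingales (those measurable w.r.t. some $\mathcal{F}_N$ and supported on finitely many $A_j$) where everything is a finite sum, proving the bound there, and then passing to the limit using the $L_{p,q}$-completeness of $\mathcal{G}_{p,q}$ and monotone convergence for the $\sup_n |d_n g|$ term. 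I expect the Doob-type inequality (Theorem \ref{doobs}) and Lemma \ref{lem:idmartingale} to be invoked to justify that the conditional-expectation pieces $\mathbb{E}_{n-1}$ appearing in the atoms and in $d_n g$ behave well in the $L_{p,q}$-norm.
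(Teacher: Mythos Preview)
Your Step~1 is correct and matches the paper exactly: the two-layer H\"older (first $(p,p')$ on each $A_j$, then $(q,q')$ over $j$) gives $|\kappa_g(f)| \le \|f\|_{\mathcal{G}_{p,q}}\|g\|_{\mathcal{BD}_{p',q'}}$.

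Your Step~2 takes a different route from the paper, and as sketched it has a genuine gap. Testing $\ell$ against level-$n$ dyadic atoms supported in $A_j$ will recover $d_n g$ on $A_j$ and, via $L_p$--$L_{p'}$ duality, control $\|d_n g\,\mathbf{1}_{A_j}\|_{p'}$ for each fixed $n$. But membership in $\mathcal{BD}_{p',q'}$ demands control of the \emph{pointwise} supremum $\|\sup_n |d_n g|\,\mathbf{1}_{A_j}\|_{p'}$, and your sketch offers no mechanism to pass from per-level bounds to this; indeed you write ``$\sup_n \|d_n g\,\mathbf{1}_{A_j}\|_{p'}$-type bounds'', which is the wrong quantity. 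The Doob inequality you hope to invoke applies to nonnegative submartingales, and $(|d_n g|)_n$ is not one, so as written the argument does not close.

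The paper's Step~2 avoids this by \emph{not} building $g$ from atoms. It embeds $\mathcal{G}_{p,q}$ into the larger (non-martingale) space $\mathcal{K}(L_{p,q},\ell_1)$ of measurable processes $\epsilon=(\epsilon_n)$ with $\big\|\sum_n|\epsilon_n|\big\|_{L_{p,q}}<\infty$, proves separately (Proposition~\ref{pro1}, via an explicit extremizing sequence) that $\big(\mathcal{K}(L_{p,q},\ell_1)\big)^*=\mathcal{K}(L_{p',q'},\ell_\infty)$, and extends the functional by Hahn--Banach. The representing process $\eta=(\eta_k)\in\mathcal{K}(L_{p',q'},\ell_\infty)$ then satisfies $\sup_k|\eta_k|\in L_{p',q'}$ \emph{by definition} --- this is where the pointwise supremum gets captured. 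One then sets $d_k g:=\mathbb{E}_k\eta_k-\mathbb{E}_{k-1}\eta_k$ and uses the pointwise bound $\sup_k|d_k g|\le 2\sup_n\mathbb{E}_n\big(\sup_k|\eta_k|\big)$; now the right-hand side \emph{is} a nonnegative submartingale, so Theorem~\ref{doobs} transfers the $L_{p',q'}$ control from $\sup_k|\eta_k|$ to $\sup_k|d_k g|$. This detour through $\mathcal{K}(L_{p,q},\ell_1)$ and the projection $\eta_k\mapsto\mathbb{E}_k\eta_k-\mathbb{E}_{k-1}\eta_k$ is precisely the missing ingredient in your plan.
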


\section{Martingale Embeddings}\label{sec4}
In this section we will discuss the various inclusions of the martingale Hardy-amalgam spaces $H^S_{p,q}(\mathbb{R}),\,\, H^s_{p,q}(\mathbb{R}),\,\,H^*_{p,q}(\mathbb{R}),\,\,\mathcal{Q}_{p,q}(\mathbb{R})$ and $\mathcal{P}_{p,q}(\mathbb{R})$ as described in Theorem \ref{thm:mt1} above. Ferenc (\cite{Ferenc}) has discussed the classical cases including the Doob's maximal inequality for $p>1$ and the Burkholder-Davis-Gundy inequality. We recall here that the disjoint cover $(A_j)_{j\in \mathbb{Z}}$ is such that $A_j\in \mathcal{F}_n$ for all $j\in \mathbb{Z}$ and all $n\geq 1$. 

We refer to (\cite[Theorem 2.11]{Ferenc}) for the following classical result. 
\begin{proposition}\label{prop:weisz}
For any $f\in \mathcal{M}$, the following hold.
\begin{itemize}
\item[(i)] $\|f\|_{H^*_{p}(\mathbb{R})}\le C_p\|f\|_{H^s_{p}(\mathbb{R})},\quad \|f\|_{H^S_{p}(\mathbb{R})}\le C_p\|f\|_{H^s_{p}(\mathbb{R})}\qquad (0<p\le2)$
\item[(ii)] $\|f\|_{H^s_{p}(\mathbb{R})}\le C_p\|f\|_{H^*_{p}(\mathbb{R})},\quad \|f\|_{H^s_{p}}(\mathbb{R})\le C_p\|f\|_{H^S_{p}(\mathbb{R})}\qquad (2\le p<\infty)$
\item[(iii)] $\|f\|_{H^*_{p}(\mathbb{R})}\le C_p\|f\|_{\mathcal{P}_{p}(\mathbb{R})},\quad \|f\|_{H^S_{p}(\mathbb{R})}\le C_p\|f\|_{\mathcal{Q}_{p}(\mathbb{R})}\qquad (0<p<\infty)$
\item[(iv)] $\|f\|_{H^*_{p}(\mathbb{R})}\le C_p\|f\|_{\mathcal{Q}_{p}(\mathbb{R})},\quad \|f\|_{H^S_{p}(\mathbb{R})}\le C_p\|f\|_{\mathcal{P}_{p}(\mathbb{R})}\qquad (0<p<\infty)$
\item[(v)] $\|f\|_{H^s_{p}(\mathbb{R})}\le C_p\|f\|_{\mathcal{P}_{p}(\mathbb{R})},\quad \|f\|_{H^s_{p}(\mathbb{R})}\le C_p\|f\|_{\mathcal{Q}_{p}(\mathbb{R})}\qquad (0<p<\infty)$.
\end{itemize}
Moreover, if the $(\mathcal{F})_{n\ge 0}$ is regular, the above five spaces are equivalent.
\end{proposition}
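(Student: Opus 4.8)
The plan is to recall the classical argument, since there is nothing genuinely new to prove here: as $L_{p,p}(\mathbb R)=L_p(\mathbb R)$, Proposition~\ref{prop:weisz} is the diagonal case of our Theorem~\ref{thm:mt1} and coincides with \cite[Theorem 2.11]{Ferenc}. Its proof draws only on classical martingale theory, and I would simply assemble the following ingredients: Doob's maximal inequality, the Burkholder--Davis--Gundy inequality $c_p\|S(f)\|_p\le\|f^*\|_p\le C_p\|S(f)\|_p$ for $0<p<\infty$, Burkholder's one-sided comparisons of the square and conditional square functions, and Garsia's inequality for predictable compensators.

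First I would settle the easy half of (iii). If $f\in\mathcal P_p$ carries a majorant $\varrho\in\rho$ with $|f_n|\le\varrho_{n-1}$, then $f^*\le\varrho_\infty$ pointwise, so $\|f\|_{H^*_p(\mathbb R)}\le\|\varrho_\infty\|_p$, and taking the infimum over admissible $\varrho$ gives $\|f\|_{H^*_p(\mathbb R)}\le\|f\|_{\mathcal P_p(\mathbb R)}$; similarly $S(f)=\sup_nS_n(f)\le\varrho_\infty$ whenever $S_n(f)\le\varrho_{n-1}$, which yields $\|f\|_{H^S_p(\mathbb R)}\le\|f\|_{\mathcal Q_p(\mathbb R)}$. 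Combining (iii) with Burkholder--Davis--Gundy immediately gives (iv), through the chains $\mathcal Q_p\hookrightarrow H^S_p\hookrightarrow H^*_p$ and $\mathcal P_p\hookrightarrow H^*_p\hookrightarrow H^S_p$. For (i) and (ii) I would feed into Burkholder--Davis--Gundy the Burkholder inequalities $\|S(f)\|_p\le C_p\|s(f)\|_p$ (valid for $0<p\le2$) and $\|s(f)\|_p\le C_p\|S(f)\|_p$ (valid for $2\le p<\infty$); this produces all four estimates there.

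The real content lies in the remaining half of (v), the embeddings $\mathcal P_p\hookrightarrow H^s_p$ and $\mathcal Q_p\hookrightarrow H^s_p$. The starting observation is that $s_n(f)^2=\sum_{i\le n}\mathbb E_{i-1}|d_if|^2$ is exactly the predictable compensator of the submartingale $(f_n^2)$, which for $f\in\mathcal P_p$ is dominated by the predictable process $(\varrho_{n-1}^2)$; for $f\in\mathcal Q_p$ one instead uses the pointwise bound $s(f)^2\le2\,\varrho_\infty A_\infty$ (obtained from $|d_if|^2=S_i(f)^2-S_{i-1}(f)^2\le2\varrho_{i-1}(S_i(f)-S_{i-1}(f))$ after taking $\mathbb E_{i-1}$ and summing), where $A$ is the compensator of $(S_n(f))$, with tail $\mathbb E_n[A_\infty-A_n]=\mathbb E_n[S(f)-S_n(f)]\le\mathbb E_n[\varrho_\infty]$. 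Garsia's compensator inequality then bounds $\|C_\infty\|_r$ (resp. $\|A_\infty\|_r$) by $C_r\|\varrho_\infty^2\|_r$ (resp. $C_r\|\varrho_\infty\|_r$) for $r\ge1$, and Hölder's inequality converts this to $\|s(f)\|_p\le C_p\|\varrho_\infty\|_p$; one finishes by taking the infimum over $\varrho$. I expect this Garsia-type step to be the main obstacle: it is the only genuinely nontrivial inequality, and for the full range $0<p<\infty$ — in particular for $p<2$, where the relevant exponent is less than $1$ — it must be replaced by a truncation/good-$\lambda$ argument or by the atomic decomposition of $\mathcal Q_p$ and $\mathcal P_p$. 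Finally, when $(\mathcal F_n)_{n\ge0}$ is regular I would close the circle of inclusions using the standard facts that regularity makes $s(f)$ comparable to $S(f)$ in every $L_p$ (so $H^S_p\simeq H^s_p$) and that a regular martingale admits predictable majorants of both $f^*$ and $S(f)$ (so $H^*_p\simeq\mathcal P_p$ and $H^S_p\simeq\mathcal Q_p$); chaining these with (i)--(v) collapses all five spaces.
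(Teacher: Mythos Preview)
Your sketch is a faithful outline of the classical proof, but it is worth noting that the paper does not prove this proposition at all: it is stated with the preface ``We refer to \cite[Theorem 2.11]{Ferenc} for the following classical result'' and then used as a black box in the proof of Theorem~\ref{thm:mt1}. So there is no proof in the paper to compare against; your proposal goes well beyond what the authors do.

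One small logical point: your opening remark that Proposition~\ref{prop:weisz} ``is the diagonal case of our Theorem~\ref{thm:mt1}'' is true as a statement, but in the paper the dependence runs the other way --- Theorem~\ref{thm:mt1} is proved \emph{from} Proposition~\ref{prop:weisz} via Lemma~\ref{lem:idmartingale} --- so you should avoid phrasing it as if the proposition could be recovered from the theorem. Apart from that, your assembly of Doob, Burkholder--Davis--Gundy, the $S$--$s$ comparisons, and the Garsia compensator estimate (with the caveat you correctly flag about small $p$) is exactly how \cite{Ferenc} establishes the result, and your identification of (v) as the nontrivial step is accurate.
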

We observe the following.
\begin{lemma}\label{lem:idmartingale}
Assume that $A\in \mathcal{F}_n$ for all $n\geq 1$. Then if $f\in \mathcal{M}$, then $f\mathbf{1}_A=(f_n\mathbf{1}_A)_{n\ge 0}$ is also  a martingale in $\mathcal{M}$. Moreover, if $T$ is any of the operators $s$, $S$ and $M$ (the maximal operator), then $$T(f\mathbf{1}_A)=T(f)\mathbf{1}_A.$$
\end{lemma}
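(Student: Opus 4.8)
The plan is to verify the two assertions in sequence: first that $f\mathbf{1}_A$ is a martingale, and then that each of the operators $s$, $S$ and $M$ commutes with multiplication by $\mathbf{1}_A$. For the martingale property, I would check the two defining conditions. Adaptedness is immediate: $f_n$ is $\mathcal{F}_n$-measurable and, by hypothesis, $A\in\mathcal{F}_n$, so $f_n\mathbf{1}_A$ is $\mathcal{F}_n$-measurable. For the martingale identity, I would compute $\mathbb{E}_{n-1}(f_n\mathbf{1}_A)$ and use the fact that $\mathbf{1}_A$ is $\mathcal{F}_{n-1}$-measurable (again because $A\in\mathcal{F}_{n-1}$) to pull it out of the conditional expectation: $\mathbb{E}_{n-1}(f_n\mathbf{1}_A)=\mathbf{1}_A\,\mathbb{E}_{n-1}(f_n)=\mathbf{1}_A f_{n-1}$, which is exactly $(f\mathbf{1}_A)_{n-1}$. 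So $f\mathbf{1}_A\in\mathcal{M}$.

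Next I would record the key elementary observation that the martingale differences of $f\mathbf{1}_A$ are $d_n(f\mathbf{1}_A)=f_n\mathbf{1}_A-f_{n-1}\mathbf{1}_A=(d_nf)\mathbf{1}_A$. From this the three operator identities follow by direct computation, using only that $\mathbf{1}_A^2=\mathbf{1}_A$ and, for the conditional square function, that $\mathbf{1}_A$ is $\mathcal{F}_{n-1}$-measurable. For $S$: $S(f\mathbf{1}_A)^2=\sum_n|d_n(f\mathbf{1}_A)|^2=\sum_n|d_nf|^2\mathbf{1}_A=S(f)^2\mathbf{1}_A$, hence $S(f\mathbf{1}_A)=S(f)\mathbf{1}_A$. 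For $s$: $s(f\mathbf{1}_A)^2=\sum_n\mathbb{E}_{n-1}(|d_nf|^2\mathbf{1}_A)=\sum_n\mathbf{1}_A\,\mathbb{E}_{n-1}|d_nf|^2=s(f)^2\mathbf{1}_A$, so $s(f\mathbf{1}_A)=s(f)\mathbf{1}_A$. For the maximal operator: $M(f\mathbf{1}_A)=\sup_n|f_n\mathbf{1}_A|=\sup_n(|f_n|\mathbf{1}_A)=\mathbf{1}_A\sup_n|f_n|=M(f)\mathbf{1}_A$, where the third equality holds because $\mathbf{1}_A$ takes only the values $0$ and $1$.

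There is essentially no serious obstacle here; the only point requiring any care is the repeated use of the hypothesis ``$A\in\mathcal{F}_n$ for all $n\ge1$'', which is what makes both the adaptedness of $f\mathbf{1}_A$ and the pull-out of $\mathbf{1}_A$ from $\mathbb{E}_{n-1}$ legitimate. I would emphasize that this is exactly the property enjoyed by the blocks $A_j=[j,j+1)$ of the amalgam decomposition, as noted after equation~\eqref{dia1}, which is why the lemma is the right tool for reducing amalgam-norm estimates to the classical block-by-block estimates. The proof is therefore short: establish the difference formula, then dispatch $S$, $s$, and $M$ in turn.
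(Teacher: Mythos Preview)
Your proposal is correct and is exactly the natural direct verification. In fact, the paper states this lemma as an ``observation'' and gives no proof at all, so your write-up supplies precisely the routine check the authors omit: pull $\mathbf{1}_A$ through $\mathbb{E}_{n-1}$ using $A\in\mathcal{F}_{n-1}$, note $d_n(f\mathbf{1}_A)=(d_nf)\mathbf{1}_A$, and read off the three operator identities from $\mathbf{1}_A^2=\mathbf{1}_A$.
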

Combining the above lemma with (\cite[Lemma 2.20]{Ferenc}), we obtain the following.
\begin{lemma}\label{lem:wesz220}
Assume that $A\in \mathcal{F}_n$ for all $n\geq 1$. Then for any martingale $f\in \mathcal{M}$ and $0<p<\infty$, we have
$$\mathbb{E}\left[\sup_n\mathbb{E}_{n-1}\left(|f_n|^p\mathbf{1}_A\right)\right]\le 2\mathbb{E}\left((f^*)^p\mathbf{1}_A\right)$$
and
$$\mathbb{E}\left[\sup_n\mathbb{E}_{n-1}\left(|S_n(f)^p\mathbf{1}_A\right)\right]\le 2\mathbb{E}\left(S(f)^p\mathbf{1}_A\right).$$
\end{lemma}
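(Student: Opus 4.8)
The plan is to deduce both estimates from the corresponding unweighted inequalities, i.e. from \cite[Lemma 2.20]{Ferenc}, applied not to $f$ but to the truncated martingale $g:=f\mathbf{1}_A=(f_n\mathbf{1}_A)_{n\ge 0}$. Recall that \cite[Lemma 2.20]{Ferenc} gives, for every martingale $h\in\mathcal{M}$ and every $0<p<\infty$,
$$\mathbb{E}\left[\sup_n\mathbb{E}_{n-1}\left(|h_n|^p\right)\right]\le 2\,\mathbb{E}\left((h^*)^p\right)\quad\text{and}\quad \mathbb{E}\left[\sup_n\mathbb{E}_{n-1}\left(S_n(h)^p\right)\right]\le 2\,\mathbb{E}\left(S(h)^p\right).$$
Since $A\in\mathcal{F}_n$ for all $n$, Lemma \ref{lem:idmartingale} ensures that $g=f\mathbf{1}_A$ is again a martingale in $\mathcal{M}$, so both inequalities above are available with $h=g$.

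It then remains to rewrite each quantity attached to $g$ in terms of the corresponding quantity attached to $f$ times $\mathbf{1}_A$. Using $\mathbf{1}_A^p=\mathbf{1}_A$ we have $|g_n|^p=|f_n|^p\mathbf{1}_A$; from $d_i g=(d_i f)\mathbf{1}_A$ we get $S_n(g)=S_n(f)\mathbf{1}_A$, hence $S_n(g)^p=S_n(f)^p\mathbf{1}_A$. On the right-hand sides, Lemma \ref{lem:idmartingale} (applied with $T=M$ and with $T=S$) yields $g^*=M(f\mathbf{1}_A)=f^*\mathbf{1}_A$ and $S(g)=S(f)\mathbf{1}_A$, so $(g^*)^p=(f^*)^p\mathbf{1}_A$ and $S(g)^p=S(f)^p\mathbf{1}_A$. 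Substituting these identities into the two inequalities for $g$ reproduces verbatim the two claimed estimates.

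This is essentially bookkeeping, so I do not anticipate a genuine obstacle; the only points requiring care are, first, the appeal to Lemma \ref{lem:idmartingale} to know that $f\mathbf{1}_A$ really is a martingale and that $M$ and $S$ commute with multiplication by $\mathbf{1}_A$, and second, the degenerate case in which the right-hand side is infinite, where there is nothing to prove. If one wishes to invoke \cite[Lemma 2.20]{Ferenc} only for $L_p$-bounded martingales, one first reduces to the case $f^*\mathbf{1}_A\in L_p$, which makes $g=f\mathbf{1}_A$ an $L_p$-bounded martingale.
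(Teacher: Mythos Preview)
Your proposal is correct and matches the paper's approach exactly: the paper simply states that the lemma follows by combining Lemma~\ref{lem:idmartingale} with \cite[Lemma~2.20]{Ferenc}, and you have spelled out precisely that combination.
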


With the help of Lemma \ref{lem:idmartingale} and Lemma \ref{lem:wesz220} we now present the proof to the martingale inequalities that relate the five sets. 
\begin{proof}[Proof of Theorem \ref{thm:mt1}]
Let $T$ be any of the operators $s$, $S$ and $M$, and $H_{p}^T(\mathbb{R}),H_{p,q}^T(\mathbb{R})$ the corresponding martingale spaces, then since $A_j\in \mathcal{F}_n$ for all $j\in \mathbb{Z}$ and all $n\geq 1$, by Lemma \ref{lem:idmartingale}, we have that for any martingale $f\in\mathcal{M}$ and any $j\in \mathbb{Z}$,
$$\int_\mathbb{R} T(f)^p\mathbf{1}_{A_j}d\mathbb{P}=\int_\mathbb{R} T(f\mathbf{1}_{A_j})^pd\mathbb{P}=\|f\mathbf{1}_{A_j}\|_{H_{p}^T(\mathbb{R})}^p,$$
and consequently,
\begin{equation}\label{eq:backtoweisz}\|T(f)\|_{H_{p,q}^T(\mathbb{R})}^q=\sum_{j}\|T(f)\mathbf{1}_{A_j}\|_{L_p(\mathbb{R})}^q=\sum_{j}\|f\mathbf{1}_{A_j}\|_{H_{p}^T(\mathbb{R})}^q.\end{equation}
The two first assertions of the theorem then follow from (\ref{eq:backtoweisz}) and Proposition \ref{prop:weisz}.
\vskip .1cm
To obtain the other assertions, following (\ref{eq:backtoweisz}) and Proposition \ref{prop:weisz}, we only need to prove that 
\begin{equation}\label{eq:backtoweisz1}\sum_{j}\|f\mathbf{1}_{A_j}\|_{\mathcal{Q}_{p}(\mathbb{R})}^q\le C\|f\|_{\mathcal{Q}_{p,q}(\mathbb{R})}^q\end{equation}
and
\begin{equation}\label{eq:backtoweisz2}\sum_{j}\|f\mathbf{1}_{A_j}\|_{\mathcal{P}_{p}(\mathbb{R})}^q\le C\|f\|_{\mathcal{P}_{p,q}(\mathbb{R})}^q.\end{equation}
We only prove (\ref{eq:backtoweisz1}) as (\ref{eq:backtoweisz2}) follows similarly.

Let $(\varrho_n)_{n\ge 0}$ be an arbitrary nonnegative nondecreasing adapted sequence such that
$$S_n(f)\le \varrho_{n-1},\,\,\textrm{and}\,\,\|\varrho_\infty\|_{L_{p,q}(\mathbb{R})}<\infty.$$
We have that the sequence $(\gamma_n^j)_{n\ge 0}=(\varrho_n\mathbf{1}_{A_j})_{n\ge 0}$ is also nonnegative nondecreasing and adapted, and  
$$S_n(f\mathbf{1}_{A_j})=S_n(f)\mathbf{1}_{A_j}\le \varrho_{n-1}\mathbf{1}_{A_j}=\gamma_{n-1}^j$$ and $$\|\gamma_\infty^j\|_{L_p(\mathbb{R})}=\|\varrho_\infty \mathbf{1}_{A_j}\|_{L_p(\mathbb{R})}\le\|\varrho_\infty\|_{L_{p,q}(\mathbb{R})} <\infty.$$
It follows that
$$\sum_{j}\|f\mathbf{1}_{A_j}\|_{\mathcal{Q}_{p}(\mathbb{R})}^q\le \sum_{j}\|\gamma_\infty^j\|_{L_p(\mathbb{R})}^q=\sum_{j}\|\varrho_\infty \mathbf{1}_{A_j}\|_{L_p(\mathbb{R})}^q=\|\varrho_\infty\|_{L_{p,q}(\mathbb{R})}^q.$$
As the sequence $(\varrho_n)_{n\ge 0}$ was chosen arbitrarily, we conclude that
$$\sum_{j}\|f\mathbf{1}_{A_j}\|_{\mathcal{Q}_{p}(\mathbb{R})}^q\le \inf_{\varrho\in\rho}\|\varrho_\infty\|_{L_{p,q}(\mathbb{R})}^q=\|f\|_{\mathcal{Q}_{p,q}(\mathbb{R})}^q.$$
Let us now assume that the $(\mathcal{F}_n)_{n\ge 0}$ is regular. To prove the equivalence between the five spaces, we only need to prove that
\begin{equation}\label{eq:embedequiv1}\|f\|_{\mathcal{Q}_{p,q}(\mathbb{R})}\le C\|f\|_{H_{p,q}^S(\mathbb{R})}\end{equation}
and \begin{equation}\label{eq:embedequiv2}\|f\|_{\mathcal{P}_{p,q}(\mathbb{R})}\le C\|f\|_{H_{p,q}^*(\mathbb{R})}.\end{equation}
We only prove the (\ref{eq:embedequiv1}) since the proof of (\ref{eq:embedequiv2}) use similar arguments.
\vskip .1cm
Let $f=(f_n)_{n\ge 0}$ be a martingale in $H_{p,q}^S(\mathbb{R})$. Then using the definition of the regularity, one obtain that
\begin{equation}\label{eq:Sineq}S_n(f)\le \left[C_p\left(S_{n-1}^p(f)+\mathbb{E}_{n-1}(S_n^p(f))\right)\right]^{\frac 1p}\end{equation}
(see \cite[p. 39]{Ferenc}).
Define the sequence $\varrho=(\varrho_n)_{n\ge 0}$ by $$\varrho_n=\left[C_p\left(S_{n}^p(f)+\mathbb{E}_{n}(S_{n+1}^p(f))\right)\right]^{\frac 1p}.$$
Then $\varrho\in \rho$ and by (\ref{eq:Sineq}), $$S_n(f)\le \varrho_{n-1}.$$
Also, we have that
$$\varrho_\infty=\sup_n\varrho_n=\left[C_p\left(S^p(f)+\sup_n\mathbb{E}_{n}(S_{n+1}^p(f))\right)\right]^{\frac 1p}.$$ 
Then using Lemma \ref{lem:idmartingale} and Lemma \ref{lem:wesz220}, we obtain for any $j\in \mathbb{Z}$,
$$\|\varrho_\infty\mathbf{1}_{A_j}\|_{L_p(\mathbb{R})}\le 3C_p\|S^p(f)\mathbf{1}_{A_j}\|_{L_p(\mathbb{R})}.$$
Hence $$\|f\|_{\mathcal{Q}_{p,q}(\mathbb{R})}\le \|\varrho_\infty\|_{L_{p,q}(\mathbb{R})}\lesssim \|S(f)\|_{L_{p,q}(\mathbb{R})}=\|f\|_{H_{p,q}^S(\mathbb{R})}.$$
The proof is complete.
\end{proof}

We finish this section with the proofs of martingale inequalities, the generalization of Doob's inequality and Burkholder-Davis-Gundy inequality. For this, we recall the following classical Doob's inequality, that can be found in \cite{Ferenc}.
\begin{proposition}\label{cl1}
Let $p>1.$ For every non-negative $L_p-$bounded submartingale $(f_n,\,\,n\in\mathbb{N}),$ we have that $$\left\|\sup_{n\in\mathbb{N}}f_n\right\|_{L_p(\mathbb{R})}\le\frac{p}{p-1}\sup_{n\in\mathbb{N}}\|f_n\|_{L_p(\mathbb{R})}.$$
\end{proposition}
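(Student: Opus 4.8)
The plan is to follow the classical two-step route: first prove the weak-type (Doob) maximal inequality by a first-entrance-time decomposition, then bootstrap to the strong $L_p$ bound via the distribution-function (layer-cake) identity combined with H\"older's inequality, and finally pass to the supremum over $n$ by monotone convergence.

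First I would fix $N\in\mathbb{N}$ and set $f^*_N:=\max_{0\le n\le N}f_n$, which lies in $L_p$ because $f^*_N\le\sum_{n=0}^N f_n$ with each $f_n\in L_p$. Fix $\lambda>0$ and let $\tau:=\inf\{n:f_n>\lambda\}$, so that $A:=\{f^*_N>\lambda\}=\{\tau\le N\}$ decomposes into the disjoint sets $A_k:=\{\tau=k\}\in\mathcal{F}_k$ for $0\le k\le N$. On each $A_k$ one has $f_k>\lambda$, whence $\lambda\mathbb{P}(A_k)\le\mathbb{E}[f_k\mathbf{1}_{A_k}]$; the submartingale inequality $\mathbb{E}_k[f_N]\ge f_k$ together with $A_k\in\mathcal{F}_k$ and $\mathbf{1}_{A_k}\ge 0$ gives $\mathbb{E}[f_k\mathbf{1}_{A_k}]\le\mathbb{E}[f_N\mathbf{1}_{A_k}]$. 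Summing over $k$ yields the weak-type maximal inequality $\lambda\mathbb{P}(f^*_N>\lambda)\le\mathbb{E}[f_N\mathbf{1}_{\{f^*_N>\lambda\}}]$.

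Next I would invoke the layer-cake identity $\mathbb{E}[(f^*_N)^p]=p\int_0^\infty\lambda^{p-1}\mathbb{P}(f^*_N>\lambda)\,d\lambda$, substitute the weak-type bound in the form $\mathbb{P}(f^*_N>\lambda)\le\lambda^{-1}\mathbb{E}[f_N\mathbf{1}_{\{f^*_N>\lambda\}}]$, and exchange the order of integration by Tonelli's theorem to arrive at $\mathbb{E}[(f^*_N)^p]\le\frac{p}{p-1}\mathbb{E}[f_N(f^*_N)^{p-1}]$. An application of H\"older's inequality with conjugate exponents $p$ and $p'=p/(p-1)$ then gives $\mathbb{E}[f_N(f^*_N)^{p-1}]\le\|f_N\|_{L_p}\bigl(\mathbb{E}[(f^*_N)^p]\bigr)^{1/p'}$, so that $\mathbb{E}[(f^*_N)^p]\le\frac{p}{p-1}\|f_N\|_{L_p}\bigl(\mathbb{E}[(f^*_N)^p]\bigr)^{1-1/p}$.

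The main obstacle is the legitimacy of dividing through by the factor $\bigl(\mathbb{E}[(f^*_N)^p]\bigr)^{1-1/p}$: this is precisely why I work first with the truncated maximal function $f^*_N$, whose $L_p$-finiteness is guaranteed by the first step, so that the division is valid and produces $\|f^*_N\|_{L_p}\le\frac{p}{p-1}\|f_N\|_{L_p}\le\frac{p}{p-1}\sup_n\|f_n\|_{L_p}$. Finally, since the submartingale is non-negative the truncated maxima increase, $f^*_N\uparrow\sup_n f_n$ as $N\to\infty$, and the monotone convergence theorem upgrades this estimate, uniform in $N$, to the claimed bound $\bigl\|\sup_n f_n\bigr\|_{L_p}\le\frac{p}{p-1}\sup_n\|f_n\|_{L_p}$.
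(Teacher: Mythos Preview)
Your argument is the standard textbook proof of Doob's $L_p$ maximal inequality and is correct as written: the stopping-time decomposition yields the weak-type bound, the layer-cake and H\"older step upgrades it to the strong bound for the truncated maximal function, and monotone convergence removes the truncation.

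The paper, however, does not give any proof of this proposition. It is stated as a classical result and attributed to Weisz's monograph \cite{Ferenc}; the paper merely recalls it in order to use it as a black box in the proof of Theorem~\ref{doobs} (the amalgam-space Doob inequality). So there is nothing to compare here beyond noting that you have supplied a full self-contained argument where the authors simply cite the literature.
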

We start with the proof of the extension of Doob's inequality.
\begin{proof}[Proof of Theorem \ref{doobs}]
Let $A_j$ be defined as equation (\ref{dia1}). Let $g_{n,j}=f_n\mathbf{1}_{A_j}.$ Since $g_{n,j}$ is a martingale, Proposition \ref{cl1} implies that $$\left\|\sup_{n\in\mathbb{N}}g_{n,j}\right\|_{L_p(\mathbb{R})}\le\frac{p}{p-1}\sup_{n\in\mathbb{N}}\|g_{n,j}\|_{L_p(\mathbb{R})}.$$ Therefore by definition, and for $0< q<\infty$,
\begin{eqnarray*}
\|\sup_{n\in\mathbb{N}}f_n\|_{L_{p,q}(\mathbb{R})}^q &=& \sum_j\left\|\sup_{n\in\mathbb{N}}f_n\mathbf{1}_{A_j}\right\|_p^q=\sum_j\left\|\sup_{n\in\mathbb{N}}g_{n,j}\right\|_p^q\\
&\le&\left(\frac{p}{p-1}\right)^q\sum_j\left(\sup_{n\in\mathbb{N}}\|g_{n,j}\|_{p}\right)^q\\
&= & \left(\frac{p}{p-1}\right)^q\sum_j\sup_{n\in\mathbb{N}}\|g_{n,j}\|_p^q = \left(\frac{p}{p-1}\right)^q\sup_{n\in\mathbb{N}}\sum_j\|g_{n,j}\|_p^q\\
& = &\left(\frac{p}{p-1}\right)^q\sup_{n\in\mathbb{N}}\sum_j\|f_n\mathbf{1}_{A_j}\|_p^q
\end{eqnarray*}
Thus $$\|\sup_{n\in\mathbb{N}}f_n\|_{L_{p,q}(\mathbb{R})}^q\le \left(\frac{p}{p-1}\right)^q\sup_{n\in\mathbb{N}}\sum_j\|f_n\mathbf{1}_{A_j}\|_p^q=\left(\frac{p}{p-1}\right)^q\sup_{n\in\mathbb{N}}\|f\|_{L_{p,q}(\mathbb{R})}^q.$$ The case $q=\infty$ follows similarly. The proof is complete. 
\end{proof}
We also obtain the proof of the extension of Burkholder-Davis-Gundy's inequality below.
\begin{proof}[Proof of Theorem \ref{BDGamalgam}]
The proof follows from Lemma \ref{lem:idmartingale} and the classical Burkholder-Davis-Gundy's inequality (see \cite[Theorem 2.12]{Ferenc}). We note that for the second equivalence, we simply replace the summation with the supremum and the result follows.
\end{proof}
 
\section{Dual Characterizations}\label{sec5}
In this Section, we focus our attention on the characterization of the the dual of the spaces $H^s_{p,q}(\mathbb{R})$ when $1<p,q<\infty$ and $\mathcal{G}_{p,q}(\mathbb{R}).$ Let us start by identifying the dual space of $H^s_{p,q}(\mathbb{R}).$

\subsection{Dual of $H^s_{p,q}(\mathbb{R})$}
As noted in the introduction, the dual of $H^s_{p,q}(\mathbb{R})$ when $0<p\le q\le1$ is a Campanato-type space (see \cite{BanSeh}).  Hence our focus is on characterizing the dual of $H^s_{p,q}(\mathbb{R})$ when $1<p\le q<\infty$. Let us start with the justification of the following important result.
\begin{lemma}\label{lem:uniconvex}
Let $2\le p\le q<\infty$. Then the space $H_{p,q}^s(\mathbb{R})$ is uniformly convex.
\end{lemma}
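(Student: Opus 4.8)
The plan is to realize $H^s_{p,q}(\mathbb{R})$ isometrically (or with equivalent norm) as a closed subspace of a space that is manifestly uniformly convex, and then invoke the fact that a closed subspace of a uniformly convex space is uniformly convex. The natural candidate is a vector-valued amalgam space: think of the martingale difference sequence $(d_nf)_{n\ge 0}$, and recall that $s(f)=\left(\sum_{n}\mathbb{E}_{n-1}|d_nf|^2\right)^{1/2}$. Since $H^s_{p,q}$ is by definition the set of $f\in\mathcal{M}$ with $s(f)\in L_{p,q}(\mathbb{R})$ and $\|f\|_{H^s_{p,q}}=\|s(f)\|_{p,q}$, I would like to build a linear isometry $f\mapsto (g_n)_{n\ge 0}$ into the amalgam-valued $\ell^2$ space $L_{p,q}(\mathbb{R};\ell^2)$, or rather into $\ell^2_{\mathbb{E}}$ weighted pointwise by the predictable factors, so that the $H^s_{p,q}$-norm becomes the norm of that ambient Banach space.

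Concretely, first I would fix the exponents $2\le p\le q<\infty$ and record the abstract input: for $2\le p<\infty$ and $2\le q<\infty$, the mixed-norm/amalgam space $L_{p,q}(\mathbb{R};H)$ with values in a Hilbert space $H$ is uniformly convex. This is the standard Clarkson-type statement — $\ell^q(L^p(H))$ is uniformly convex when $2\le p,q<\infty$ — specialized via the disjoint decomposition $\mathbb{R}=\bigsqcup_j A_j$ with $A_j\in\mathcal{F}_n$ for all $n$, which is exactly the structure that makes $L_{p,q}(\mathbb{R})=\ell^q\big((L^p(A_j))_j\big)$. The key observation from Lemma~\ref{lem:idmartingale} is that $s(f\mathbf{1}_{A_j})=s(f)\mathbf{1}_{A_j}$, so the amalgam norm on $\mathbb{R}$ genuinely decouples over the blocks $A_j$, and on each block we have an honest $L^p$-norm of a Hilbert-valued (here $\ell^2$-valued, indexed by $n$ and by the dyadic atoms at generation $n-1$) function.

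Then the embedding step: define $\Phi(f)=\big(\sqrt{\mathbb{E}_{n-1}|d_nf|^2}\big)_{n\ge 0}$ as an $\ell^2$-valued function on $\mathbb{R}$; then $|\Phi(f)(x)|_{\ell^2}=s(f)(x)$ pointwise, hence $\|\Phi(f)\|_{L_{p,q}(\mathbb{R};\ell^2)}=\|f\|_{H^s_{p,q}}$. One must check that $\Phi$ is \emph{linear} — this is the only genuinely delicate point, because $\sqrt{\mathbb{E}_{n-1}|\cdot|^2}$ is not linear. The standard fix is to instead send $d_nf$ to the Hilbert-valued object $h_n := \mathbb{E}_{n-1}(d_nf\otimes e)^{1/2}$... which has the same defect. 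The cleaner route, and the one I would actually carry out, is: fix $n$; on each atom $I\in\mathcal{D}_{n-1}$ the quantity $\mathbb{E}_{n-1}|d_nf|^2$ is the constant $|I|^{-1}\int_I |d_nf|^2$, and $d_nf$ restricted to $I$ lives in the finite-dimensional space spanned by the two children of $I$; map $f$ linearly to the collection $\big(d_nf\big)_{n}$ viewed inside $L^2$ of the product measure, which is a Hilbert space $\mathcal H$, and note $\|(d_nf)_n\|_{\mathcal H}$-localized-over-$I$ equals $s(f)$ up to the deterministic weights. Thus $f\mapsto (d_nf)_{n\ge 0}$ is a linear map into $L_{p,q}(\mathbb{R};\mathcal H)$ (values in $\mathcal H=\ell^2$, with the inner product weighted so that the $\mathcal H$-norm at $x$ is exactly $s(f)(x)$), it is an isometry onto its range, and its range is closed because $H^s_{p,q}$ is complete. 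Since $L_{p,q}(\mathbb{R};\mathcal H)$ is uniformly convex (Clarkson, $2\le p,q<\infty$, Hilbert-valued) and closed subspaces inherit uniform convexity, $H^s_{p,q}(\mathbb{R})$ is uniformly convex.

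The main obstacle is precisely making the identification $f\mapsto(\text{Hilbert-valued object})$ both linear and norm-preserving for $s(f)$: the naive choice using square roots is nonlinear, so one has to carry the martingale differences themselves (which depend linearly on $f$) into the Hilbert space and absorb the conditional-expectation weights into the inner product — equivalently, work in $L^2(\mathbb{R}\times\mathbb{N},\mu)$ for the appropriate $\sigma$-finite measure $\mu$ tracking the predictable weights. Once that bookkeeping is set up correctly, the uniform convexity is purely the classical Clarkson inequality for $\ell^q(L^p)$ with Hilbert-space values, $2\le p\le q<\infty$, combined with completeness of $H^s_{p,q}$ (which gives closedness of the image).
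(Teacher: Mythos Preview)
Your strategy is different from the paper's and, as written, has a gap at precisely the point you flag as delicate. The paper does not build any embedding; it works directly from the pointwise parallelogram identity
\[
s^2(f+g)+s^2(f-g)=2\bigl(s^2(f)+s^2(g)\bigr),
\]
which holds because $s^2$ is a sum of conditional second moments and hence a quadratic form in the martingale differences. From this, two applications of the elementary inequalities $a^r+b^r\le(a+b)^r$ and $(a+b)^r\le 2^{r-1}(a^r+b^r)$ for $r\ge1$ (first with $r=p/2$ pointwise, then with $r=q/p$ on the local $L_p$-norms over the blocks $A_j$) give the Clarkson-type inequality
\[
\|f+g\|_{H^s_{p,q}}^q+\|f-g\|_{H^s_{p,q}}^q\le 2^{q-1}\bigl(\|f\|_{H^s_{p,q}}^q+\|g\|_{H^s_{p,q}}^q\bigr),
\]
and uniform convexity follows in one line.

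Your embedding plan is morally the same idea---the parallelogram law above \emph{is} the Hilbert structure you are trying to exhibit abstractly---but you never actually produce the linear isometry. Sending $f$ to $(d_nf)_n$ with a weighted $\ell^2$-inner product ``so that the $\mathcal H$-norm at $x$ is exactly $s(f)(x)$'' is not a construction: $\mathbb{E}_{n-1}|d_nf|^2(x)$ is an average of $|d_nf|^2$ over the whole atom $I_{n-1}(x)$, not a pointwise function of $d_nf(x)$, so it is not clear a priori that $f$-independent weights exist. In the strictly dyadic filtration of this paper they in fact do, because the martingale constraint on a two-child split forces the value of $d_nf$ on each child to be a fixed scalar multiple of the value on its sibling, and one can check that the weight $w_n(x)=\mathbb{P}(\text{sibling of }I_n(x))/\mathbb{P}(I_n(x))$ works; but you have not carried out this verification, and it is special to binary splits. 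The paper's direct route sidesteps all of this bookkeeping: the parallelogram identity for $s$ holds for any filtration, and the Clarkson inequality follows in three lines without any isometric embedding, any appeal to completeness of $H^s_{p,q}$, or any vector-valued Clarkson theorem.
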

\begin{proof}
We recall that a Banach space $\mathcal{H}$ is uniformly convex if for any $\epsilon > 0$, there exists $\delta >0$ such that if $x,y\in \mathcal{H}$ with $\|x\|_{\mathcal{H}}\le 1$, $\|y\|_{\mathcal{H}}\le 1$ and $\|x-y\|_{\mathcal{H}}\ge \epsilon$, then $\|x+y\|_{\mathcal{H}}\le 2(1-\delta)$.
\vskip .1cm
We recall that for $1\le r<\infty$ and for $a,b>0$, $$(a+b)^r\le 2^{r-1}(a^r+b^r)\quad\textrm{and}\quad a^r+b^r\le (a+b)^r.$$
Let $\epsilon>0$, and assume that $f,g\in H_{p,q}^s$ with $\|f\|_{H_{p,q}^s}\le 1$, $\|g\|_{H_{p,q}^s}\le 1$ and $\|f-g\|_{H_{p,q}^s}\ge \epsilon$. We start by observing that $$s^2(f+g)+s^2(f-g)=2(s^2(f)+s^2(g)).$$
We then obtain
\begin{eqnarray*}
\left(s^2(f+g)\right)^{\frac p2}+\left(s^2(f-g)\right)^{\frac p2}\le \left(s^2(f+g)+s^2(f-g)\right)^{\frac p2}\le 2^{p-1}\left[s^p(f)+s^p(g)\right].
\end{eqnarray*}
Hence for any $j\in \mathbb{Z}$,
$$\|s(f+g)\mathbf{1}_{A_j}\|_p^p+\|s(f+g)\mathbf{1}_{A_j}\|_p^p\le 2^{p-1}\left(\|s(f)\mathbf{1}_{A_j}\|_p^p+\|s(g)\mathbf{1}_{A_j}\|_p^p\right).$$
Raising both members of the last inequality to the power $\frac qp\ge 1$, we obtain
\begin{eqnarray*}
\|s(f+g)\mathbf{1}_{A_j}\|_p^q+\|s(f+g)\mathbf{1}_{A_j}\|_p^q &\le& \left(\|s(f+g)\mathbf{1}_{A_j}\|_p^p+\|s(f+g)\mathbf{1}_{A_j}\|_p^p\right)^{\frac qp}\\ &\le& 2^{\frac{q}{p}(p-1)}\left(\|s(f)\mathbf{1}_{A_j}\|_p^p+\|s(g)\mathbf{1}_{A_j}\|_p^p\right)^{\frac qp}\\ &\le& 2^{\frac{q}{p}(p-1)}2^{\frac qp-1}\left(\|s(f)\mathbf{1}_{A_j}\|_p^q+\|s(g)\mathbf{1}_{A_j}\|_p^q\right).
\end{eqnarray*}
Hence taking the sum over $j\in \mathbb{Z}$, we obtain
$$\|s(f+g)\|_{p,q}^q+\|s(f-g)\|_{p,q}^q\le 2^{q-1}\left(\|s(f)\|_{p,q}^q+\|s(g)\|_{p,q}^q\right)$$
and so 
\begin{eqnarray*}\|s(f+g)\|_{p,q}^q &\le& 2^{q-1}\left(\|s(f)\|_{p,q}^q+\|s(g)\|_{p,q}^q\right)-\|s(f-g)\|_{p,q}^q\\ &\le& 2^{q}-\epsilon^q.
\end{eqnarray*}
Thus $$\|s(f+g)\|_{p,q}\le 2(1-\delta)$$ where $$\delta=1-\left(1-\frac{\epsilon^q}{2^q}\right)^{\frac 1q}$$ and the proof is complete.
\end{proof}
From the above Lemma and Milman's Theorem (see \cite[p.127]{Yosida}), we deduce the following.
\begin{corollary}\label{cor:Milman}
Let $2\le p\le q<\infty$. Then the space $H_{p,q}^s(\mathbb{R})$ is reflexive.
\end{corollary}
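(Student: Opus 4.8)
The plan is to derive reflexivity of $H^s_{p,q}(\mathbb{R})$ for $2\le p\le q<\infty$ directly from Lemma \ref{lem:uniconvex}, which asserts that this space is uniformly convex, together with the Milman--Pettis theorem, stated as Milman's theorem in \cite[p.\,127]{Yosida}: every uniformly convex Banach space is reflexive. The one genuine prerequisite not yet isolated in the excerpt is that $H^s_{p,q}(\mathbb{R})$ is a \emph{Banach} space (not merely a normed space) in this range of exponents — i.e.\ that it is complete — since the Milman--Pettis statement presumes completeness. For $1\le p\le q<\infty$ the functional $f\mapsto \|s(f)\|_{p,q}$ is a genuine norm (subadditivity of $s$ together with the triangle inequality in $L_{p,q}$, which is itself a norm when $p,q\ge 1$), so the only point to check is completeness; this is routine and is implicitly part of the framework of \cite{BanSeh}, so I would simply invoke it.

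Thus the proof would read essentially: by Lemma \ref{lem:uniconvex}, $H^s_{p,q}(\mathbb{R})$ is a uniformly convex Banach space; by the Milman--Pettis theorem every uniformly convex Banach space is reflexive; hence $H^s_{p,q}(\mathbb{R})$ is reflexive. I do not expect any obstacle here — this is a one-line deduction from the previous lemma and a cited classical theorem, which is exactly why the authors present it as a corollary.

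If one wanted to be slightly more self-contained, an alternative route avoiding Milman--Pettis would be to argue directly that bounded sequences have weakly convergent subsequences using uniform convexity plus the parallelogram-type inequality already established in the proof of Lemma \ref{lem:uniconvex}; but this effectively reproves Milman--Pettis and is not worth doing. I would keep the proof to the two-sentence citation form. The only thing worth a remark in the write-up is that the restriction $2\le p\le q<\infty$ is inherited verbatim from Lemma \ref{lem:uniconvex}, where it is needed to make the exponents $p/2\ge 1$ and $q/p\ge 1$ work in the convexity estimates; outside this range the present argument gives nothing, which is consistent with the fact that the duality theorem \ref{thm:dualitygrand} must also treat the complementary range $1<q\le p\le 2$ separately (there presumably by a dual/predual argument rather than by reflexivity obtained this way).
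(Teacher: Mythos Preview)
Your proposal is correct and matches the paper's own argument exactly: the corollary is deduced in one line from Lemma~\ref{lem:uniconvex} together with Milman's theorem \cite[p.\,127]{Yosida}. Your added remark about completeness is a reasonable caveat, but the paper does not spell it out either; the approaches are identical.
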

\begin{proof}[Proof of Theorem \ref{thm:dualitygrand}]
It follows from Corollary \ref{cor:Milman} above that we only need to prove for the case $1< q\le p\le 2.$ Let $g\in H^s_{p',q'}(\mathbb{R})$ and   \begin{eqnarray*}\label{dk1}\kappa_g(f):=\mathbb{E}\left(\sum_{n=0}^{\infty}d_nfd_ng\right)\quad \left(f\in H^s_{p,q}(\mathbb{R})\right).\end{eqnarray*} 
Hence by Schwarz's inequality, we have that
\begin{eqnarray*}
|\kappa_g(f)| &\le& \int_{\mathbb{R}}\sum_{n=0}^{\infty}\mathbb{E}_{n-1}|d_nf||d_ng|\mathrm{d}\mathbb{P}\\
&=& \sum_{j\in\mathbb{Z}}\int_{A_j}\sum_{n=0}^{\infty}\mathbb{E}_{n-1}|d_nf||d_ng|\mathrm{d}\mathbb{P}\\
&\le& \sum_{j\in\mathbb{Z}}\int_{A_j}\sum_{n=0}^{\infty}\left(\mathbb{E}_{n-1}|d_nf|^2\right)^{\frac{1}{2}}\left(\mathbb{E}_{n-1}|d_ng|^2\right)^{\frac{1}{2}}\mathrm{d}\mathbb{P}\\
&\le&\sum_{j\in\mathbb{Z}}\int_{A_j}\left(\sum_{n=0}^{\infty}\mathbb{E}_{n-1}|d_nf|^2\right)^{\frac{1}{2}}\left(\sum_{n=0}^{\infty}\mathbb{E}_{n-1}|d_ng|^2\right)^{\frac{1}{2}}\mathrm{d}\mathbb{P}\\
&=&\sum_{j\in\mathbb{Z}}\int_{A_j}s(f)s(g)\mathrm{d}\mathbb{P}.
\end{eqnarray*}
Applying the H\"older's inequality to the right hand of the last inequality, we obtain 
\begin{eqnarray*}
|\kappa_g(f)| &\le& \sum_{j\in\mathbb{Z}}\|s(f)\mathbf{1}_{A_j}\|_p\|s(g)\mathbf{1}_{A_j}\|_{p'}\\
&\le&\left(\sum_{j\in\mathbb{Z}}\|s(f)\mathbf{1}_{A_j}\|_p^q\right)^{\frac{1}{q}}\left(\sum_{j\in\mathbb{Z}}\|s(g)\mathbf{1}_{A_j}\|_{p'}^{q'}\right)^{\frac{1}{q'}}\\
&=&\|f\|_{H^s_{p,q}(\mathbb{R})}\|g\|_{H^s_{p',q'}(\mathbb{R})}.
\end{eqnarray*}
Thus $\kappa_g\in \left(H^s_{p,q}\right)'$ and $$\|\kappa_g\|\le \|g\|_{H^s_{p',q'}(\mathbb{R})}.$$

Conversely, let $\kappa$ be a continuous linear functional on $H^s_{p,q}(\mathbb{R}).$ Then as $H^s_{p,q}(\mathbb{R})$ embeds continuously into $H^s_{p}(\mathbb{R})$ (since $q<p$), we have by the Hahn-Banach theorem that $\kappa$ can be extended to a continuous linear functional $\tilde{\kappa}$ on $H^s_{p}(\mathbb{R})$ having the same operator norm as $\kappa$. It follows from \cite[Theorem 2.26]{Ferenc} that there exists some $g\in H^s_{p'}(\mathbb{R})$ such that  $$\tilde{\kappa}(f) = \mathbb{E}(fg)\quad \left(\forall f\in H^s_{p}(\mathbb{R})\right).$$ 
In particular
\begin{equation}\label{eq:dualityrep}\kappa(f)=\tilde{\kappa}(f) = \mathbb{E}(fg)\quad \left(\forall f\in H^s_{p,q}(\mathbb{R})\right).\end{equation}
Let us prove that \begin{equation}\label{eq:dualnecess}\|g\|_{H^s_{p',q'}(\mathbb{R})}\lesssim \sup_{f\in H^s_{p,q}(\mathbb{R}),\,\|f\|_{H^s_{p,q}(\mathbb{R})}\le 1}|\kappa(f)|<\infty.\end{equation}
Obviously, this holds if $\|g\|_{H^s_{p',q'}(\mathbb{R})}=0$. Hence we assume that $\|g\|_{H^s_{p',q'}(\mathbb{R})}\neq 0$. 
\vskip .1cm
We recall that,  $A_j\in\mathcal{F}_n$ for all $j\in \mathbb{Z}$ and $n\ge 1$. Set  \begin{eqnarray}\label{trans1}\mu_n = \sum_{j\in\mathbb{Z}}\frac{s^{p'-2}_n(g)\mathbf{1}_{A_j}}{\|s(g)\|_{p',q'}^{q'-1}\|s(g)\mathbf{1}_{A_j}\|_{p'}^{p'-q'}}.\end{eqnarray} Since the $A_j$'s are pairwise disjoint, we have that $$\mu_n^2 = \sum_{j\in\mathbb{Z}}\frac{s^{2p'-4}_n(g)\mathbf{1}_{A_j}}{\|s(g)\|_{p',q'}^{2q'-2}\|s(g)\mathbf{1}_{A_j}\|_{p'}^{2(p'-q')}}.$$ From the definition of $s(\cdot),$ we have that $\mu_n$ is $\mathcal{F}_{n-1}$-measurable. We define $h$ as the martingale transform of $g$ by $\mu_n.$ That is \begin{eqnarray}\label{trans2}d_nh=\mu_n d_ng.\end{eqnarray} We then obtain $$\sum_{n=0}^{\infty}\mathbb{E}_{n-1}|d_nh|^2=\sum_{n=0}^{\infty}\mu_n^2\mathbb{E}_{n-1}|d_ng|^2$$ or equivalently $$s^2(h)=\sum_{n=0}^{\infty}\sum_{j\in\mathbb{Z}}\frac{s^{2p'-4}_n(g)\mathbf{1}_{A_j}}{\|s(g)\|_{p',q'}^{2q'-2}\|s(g)\mathbf{1}_{A_j}\|_{p'}^{2(p'-q')}}\mathbb{E}_{n-1}|d_ng|^2.$$ Therefore
\begin{eqnarray*}
s^2(h) &=& \sum_{j\in\mathbb{Z}}\frac{\mathbf{1}_{A_j}}{\|s(g)\|_{p',q'}^{2q'-2}\|s(g)\mathbf{1}_{A_j}\|_{p'}^{2(p'-q')}}\sum_{n=0}^{\infty}s^{2p'-4}_n(g)\mathbb{E}_{n-1}|d_ng|^2\\
&=& \sum_{j\in\mathbb{Z}}\frac{\mathbf{1}_{A_j}}{\|s(g)\|_{p',q'}^{2q'-2}\|s(g)\mathbf{1}_{A_j}\|_{p'}^{2(p'-q')}}\sum_{n=0}^{\infty}s^{2p'-4}_n(g)(s_n^2(g)-s_{n-1}^2(g))\\
&=& \frac{1}{\|s(g)\|_{p',q'}^{2q'-2}}\sum_{j\in\mathbb{Z}}\frac{\mathbf{1}_{A_j}}{\|s(g)\mathbf{1}_{A_j}\|_{p'}^{2(p'-q')}}\sum_{n=0}^{\infty}[s^{2p'-2}_n(g)-s^{2p'-4}_n(g)s_{n-1}^2(g)].
\end{eqnarray*}
It follows that
\begin{eqnarray*}
s^2(h)&\le& \frac{1}{\|s(g)\|_{p',q'}^{2q'-2}}\sum_{j\in\mathbb{Z}}\frac{\mathbf{1}_{A_j}}{\|s(g)\mathbf{1}_{A_j}\|_{p'}^{2(p'-q')}}\sum_{n=0}^{\infty}[s^{2p'-2}_n(g)-s^{2p'-2}_{n-1}(g)]\\
&=& \frac{1}{\|s(g)\|_{p',q'}^{2q'-2}}\sum_{j\in\mathbb{Z}}\frac{s^{2p'-2}(g)\mathbf{1}_{A_j}}{\|s(g)\mathbf{1}_{A_j}\|_{p'}^{2(p'-q')}}.
\end{eqnarray*}
Thus, by disjointness of the $A_j$'s,
\begin{eqnarray}\label{trans3}
s(h) \le \frac{s^{p'-1}(g)}{\|s(g)\|_{p',q'}^{q'-1}}\sum_{j\in\mathbb{Z}}\frac{\mathbf{1}_{A_j}}{\|s(g)\mathbf{1}_{A_j}\|_{p'}^{p'-q'}}.
\end{eqnarray}

We also have that for any $k\in \mathbb{Z}$, $$s(h)\mathbf{1}_{A_k} \le  \sum_{j\in\mathbb{Z}}\frac{s^{p'-1}(g)}{\|s(g)\|_{p',q'}^{q'-1}}\frac{\mathbf{1}_{A_j}}{\|s(g)\mathbf{1}_{A_j}\|_{p'}^{p'-q'}}\mathbf{1}_{A_k}=\frac{s^{p'-1}(g)}{\|s(g)\|_{p',q'}^{q'-1}}\frac{\mathbf{1}_{A_k}}{\|s(g)\mathbf{1}_{A_k}\|_{p'}^{p'-q'}}.$$ Therefore $$\|s(h)\mathbf{1}_{A_k}\|_p\le\frac{\|s^{p'-1}(g)\mathbf{1}_{A_k}\|_p}{\|s(g)\|_{p',q'}^{q'-1}\|s(g)\mathbf{1}_{A_k}\|_{p'}^{p'-q'}}=\frac{\|s(g)\mathbf{1}_{A_k}\|_{p'}^{p'-1}}{\|s(g)\|_{p',q'}^{q'-1}\|s(g)\mathbf{1}_{A_k}\|_{p'}^{p'-q'}}=\frac{\|s(g)\mathbf{1}_{A_k}\|_{p'}^{q'-1}}{\|s(g)\|_{p',q'}^{q'-1}}.$$ Hence $$\sum_{k\in\mathbb{Z}}\|s(h)\mathbf{1}_{A_k}\|_p^q\le\sum_{k\in\mathbb{Z}}\frac{\|s(g)\mathbf{1}_{A_k}\|_{p'}^{q(q'-1)}}{\|s(g)\|_{p',q'}^{q(q'-1)}}=\sum_{k\in\mathbb{Z}}\frac{\|s(g)\mathbf{1}_{A_k}\|_{p'}^{q'}}{\|s(g)\|_{p',q'}^{q'}}=\frac{\|s(g)\|_{p',q'}^{q'}}{\|s(g)\|_{p',q'}^{q'}}=1.$$ That is $$\|h\|_{H^s_{p,q}(\mathbb{R})}\le 1.$$    
We now test (\ref{eq:dualnecess}) with the martingale $h$ above. First proceeding as in \cite[p.37]{Ferenc} (this is why we need $p$ to be smaller than $2$), we obtain  
\begin{eqnarray*}
|\kappa(h)| &=& \mathbb{E}\left(\sum_{n=0}^{\infty}d_nhd_ng\right)=\mathbb{E}\left(\sum_{n=0}^{\infty}\mu_n|d_ng|^2\right)\\
&=& \frac{1}{\|s(g)\|_{p',q'}^{q'-1}}\mathbb{E}\left(\sum_{n=0}^{\infty}\sum_{j\in\mathbb{Z}}\frac{s^{p'-2}_n(g)\mathbf{1}_{A_j}}{\|s(g)\mathbf{1}_{A_j}\|_{p'}^{p'-q'}}\mathbb{E}_{n-1}|d_ng|^2\right)\\
&=& \frac{1}{\|s(g)\|_{p',q'}^{q'-1}}\mathbb{E}\left(\sum_{n=0}^{\infty}\sum_{j\in\mathbb{Z}}\frac{s^{p'-2}_n(g)\mathbf{1}_{A_j}}{\|s(g)\mathbf{1}_{A_j}\|_{p'}^{p'-q'}}(s^2_n(g)-s^2_{n-1}(g))\right)
\end{eqnarray*}
It follows that
\begin{eqnarray*}
|\kappa(h)|
&\ge& \frac{2}{p'}\frac{1}{\|s(g)\|_{p',q'}^{q'-1}}\sum_{j\in\mathbb{Z}}\frac{1}{\|s(g)\mathbf{1}_{A_j}\|_{p'}^{p'-q'}}\mathbb{E}\left(\mathbf{1}_{A_j}\sum_{n=0}^{\infty}s^{p'}_n(g)-s^{p'}_{n-1}(g)\right)\\
&=& \frac{2}{p'}\frac{1}{\|s(g)\|_{p',q'}^{q'-1}}\sum_{j\in\mathbb{Z}}\frac{1}{\|s(g)\mathbf{1}_{A_j}\|_{p'}^{p'-q'}}\mathbb{E}\left(\mathbf{1}_{A_j}s^{p'}(g)\right) \\
&=& \frac{2}{p'}\frac{1}{\|s(g)\|_{p',q'}^{q'-1}}\sum_{j\in\mathbb{Z}}\frac{1}{\|s(g)\mathbf{1}_{A_j}\|_{p'}^{p'-q'}}\int_{\mathbb{R}}\mathbf{1}_{A_j}s^{p'}(g)\mathrm{d}\mathbb{P}\\
&=&\frac{2}{p'}\frac{1}{\|s(g)\|_{p',q'}^{q'-1}}\sum_{j\in\mathbb{Z}}\frac{1}{\|s(g)\mathbf{1}_{A_j}\|_{p'}^{p'-q'}}\|s(g)\mathbf{1}_{A_j}\|_{p'}^{p'}\\
&=& \frac{2}{p'}\frac{1}{\|s(g)\|_{p',q'}^{q'-1}}\sum_{j\in\mathbb{Z}}\|s(g)\mathbf{1}_{A_j}\|_{p'}^{q'}\\
&=& \frac{2}{p'}\frac{\|s(g)\|_{p',q'}^{q'}}{\|s(g)\|_{p',q'}^{q'-1}}=\frac{2}{p'}\|s(g)\|_{L_{p',q'}(\mathbb{R})}.
\end{eqnarray*}
The proof is complete.
\end{proof}
\subsection{Dual of $\mathcal{G}_{p,q}$}
This part is devoted to the characterization of the dual of the variation integrable space. We begin our characterization of the dual of $\mathcal{G}_{p,q}$ with the introduction of the following larger space.
\begin{definition}
Let $n\in\mathbb{N}_0$ and let $1\le p,q,r<\infty.$ We define the space $\mathcal{K}(L_{p,q},\ell_r)$ by $$\mathcal{K}(L_{p,q},\ell_r)=\left\{\mbox{measurable process}\,\, \epsilon=(\epsilon_n)_{n\ge0}:\|\epsilon\|_{\mathcal{K}(L_{p,q},\ell_r)}<\infty\right\}$$ where $$\|\epsilon\|_{\mathcal{K}(L_{p,q},\ell_r)}=\left\|\left(\sum_{n\ge0}|\epsilon_n|^r\right)^{\frac{1}{r}}\right\|_{L_{p,q}(\mathbb{R})}.$$
\end{definition}
We observe that $\mathcal{G}_{p,q}\subseteq\mathcal{K}(L_{p,q},\ell_1).$ Indeed let $f$ be a martingale. Then it is measurable with respect to the underlining filtration hence its increment, $d_nf,$ is also measurable. Thus we can take $\epsilon_n=d_nf$ and the inclusion then follows by setting $r=1.$ 
In the same way, we obtain that $\mathcal{BD}_{p,q}\subseteq\mathcal{K}(L_{p,q},\ell_{\infty}).$ 

We also observe that since $L_{p,p}(\mathbb{R})=L_p(\mathbb{R}),$ then $\mathcal{K}(L_{p,p},\ell_r)$ is the space defined in \cite[Definition 2.8]{Ferenc}. 
The following lemma is part of the proof of Proposition \ref{pro1} that follows, but for the sake of the presentation, we isolate it.
\begin{lemma}\label{lemma1}
Let $1< p,q<\infty$, $1\le r<\infty$ and let $(p,p'),\,\,(q,q'),\,\,(r,r')$ be their respective conjugate pairs. Let $\eta\in\mathcal{K}(L_{p',q'},\ell_{r'}).$ Consider the sequence $h=(h_k)_{k\ge0}$ defined as follows $$h_k=\left\{\begin{array}{lcr}\sum_{i\ge0}\frac{|\eta_k|^{r'}}{\eta_k}\frac{\|\eta\|_{\ell_{r'}}^{p'-r'}}{\|\eta\|_{\mathcal{K}(L_{p',q'},\ell_{r'})}^{q'-1}}\frac{\mathbf{1}_{A_i}}{\left\|\|\eta\|_{\ell_{r'}}\mathbf{1}_{A_i}\right\|_{L_{p'}}^{p'-q'}}&,&\eta_k\ne0\\0&,&\mbox{otherwise}\end{array}\right.$$  if $r>1$, and 
$$h_k=\left\{\begin{array}{lcr}\sum_{i\ge0}\frac{\sign(\eta_k)}{2^{k+1}}\frac{\|\eta\|_{\ell_{r'}}^{p'-1}}{\|\eta\|_{\mathcal{K}(L_{p',q'},\ell_{r'})}^{q'-1}}\frac{\mathbf{1}_{A_i}}{\left\|\|\eta\|_{\ell_{r'}}\mathbf{1}_{A_i}\right\|_{L_{p'}}^{p'-q'}}&,&\eta_k\ne0\\0&,&\mbox{otherwise}\end{array}\right.$$ for $r=1$.
Then
 $h$  has a unit norm in $\mathcal{K}(L_{p,q},\ell_{r}).$ Consequently $h\in\mathcal{K}(L_{p,q},\ell_{r}).$
\end{lemma}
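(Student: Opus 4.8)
The plan is to compute $\|h\|_{\mathcal{K}(L_{p,q},\ell_r)}$ directly and show it equals $1$. First I would treat the case $r>1$. The key observation is that the inner sum over $i$ collapses on each $A_k$-fiber: since the $A_i$'s are pairwise disjoint and cover $\mathbb{R}$, for a fixed point lying in $A_i$ only the $i$-th term survives, so $|h_k| = \dfrac{|\eta_k|^{r'-1}}{\|\eta\|_{\ell_{r'}}^{r'-p'}} \cdot \dfrac{1}{\|\eta\|_{\mathcal{K}(L_{p',q'},\ell_{r'})}^{q'-1}} \sum_{i\ge 0} \dfrac{\mathbf{1}_{A_i}}{\bigl\|\|\eta\|_{\ell_{r'}}\mathbf{1}_{A_i}\bigr\|_{L_{p'}}^{p'-q'}}$ (using $|\eta_k|^{r'}/\eta_k$ has modulus $|\eta_k|^{r'-1}$). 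Then I would compute the $\ell_r$-norm over $k$: since $(r'-1)r = r'$ (as $r,r'$ are conjugate), we get $\Bigl(\sum_k |h_k|^r\Bigr)^{1/r} = \|\eta\|_{\ell_{r'}}^{(r'/r)}\, \|\eta\|_{\ell_{r'}}^{p'-r'} \cdot (\cdots) = \|\eta\|_{\ell_{r'}}^{p'-1}\cdot \dfrac{1}{\|\eta\|_{\mathcal{K}(L_{p',q'},\ell_{r'})}^{q'-1}} \sum_i \dfrac{\mathbf{1}_{A_i}}{\bigl\|\|\eta\|_{\ell_{r'}}\mathbf{1}_{A_i}\bigr\|_{L_{p'}}^{p'-q'}}$, where I used $r'/r + (p'-r') = r'(1-1/r') + p' - r' = r' - 1 + p' - r' = p'-1$.

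Next I would take the $L_{p,q}$-norm of this quantity. Writing $G := \|\eta\|_{\ell_{r'}}$ for brevity (so $G \in L_{p',q'}$ with $\|G\|_{L_{p',q'}} = \|\eta\|_{\mathcal{K}(L_{p',q'},\ell_{r'})}$), we must compute $\Bigl\| G^{p'-1} \sum_i \dfrac{\mathbf{1}_{A_i}}{\|G\mathbf{1}_{A_i}\|_{L_{p'}}^{p'-q'}} \Bigr\|_{L_{p,q}}$, divided by $\|G\|_{L_{p',q'}}^{q'-1}$. For the $L_p$-norm over a single block $A_k$, only the $k$-th term of the inner sum contributes, giving $\bigl\| G^{p'-1}\mathbf{1}_{A_k} \bigr\|_{L_p} / \|G\mathbf{1}_{A_k}\|_{L_{p'}}^{p'-q'}$; since $(p'-1)p = p'$ this equals $\|G\mathbf{1}_{A_k}\|_{L_{p'}}^{p'-1} / \|G\mathbf{1}_{A_k}\|_{L_{p'}}^{p'-q'} = \|G\mathbf{1}_{A_k}\|_{L_{p'}}^{q'-1}$. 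Then the $\ell_q$-sum over $k$ gives $\Bigl(\sum_k \|G\mathbf{1}_{A_k}\|_{L_{p'}}^{q(q'-1)}\Bigr)^{1/q} = \Bigl(\sum_k \|G\mathbf{1}_{A_k}\|_{L_{p'}}^{q'}\Bigr)^{1/q}$ since $q(q'-1) = q'$, and this is $\|G\|_{L_{p',q'}}^{q'/q} = \|G\|_{L_{p',q'}}^{q'-1}$. Dividing by $\|G\|_{L_{p',q'}}^{q'-1}$ gives exactly $1$. This is essentially the same bookkeeping that appears in the proof of Theorem \ref{thm:dualitygrand} with $s(g)$ replaced by $\|\eta\|_{\ell_{r'}}$.

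For the case $r=1$, the factor $|\eta_k|^{r'}/\eta_k$ is replaced by $\sign(\eta_k)/2^{k+1}$ (with $r'=\infty$ formally, $\|\eta\|_{\ell_{r'}} = \sup_k|\eta_k| =: G$), so $|h_k| = \dfrac{1}{2^{k+1}} \dfrac{G^{p'-1}}{\|G\|_{L_{p',q'}}^{q'-1}} \sum_i \dfrac{\mathbf{1}_{A_i}}{\|G\mathbf{1}_{A_i}\|_{L_{p'}}^{p'-q'}}$, and the $\ell_1$-norm over $k$ introduces the factor $\sum_k 2^{-(k+1)} = 1$, leaving the identical expression $G^{p'-1}/\|G\|_{L_{p',q'}}^{q'-1} \cdot \sum_i \mathbf{1}_{A_i}/\|G\mathbf{1}_{A_i}\|_{L_{p'}}^{p'-q'}$, whose $L_{p,q}$-norm is again $1$ by the computation above. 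The main obstacle, if any, is purely notational: keeping the exponent arithmetic straight (the three conjugacy relations $1/p+1/p'=1$, $1/q+1/q'=1$, $1/r+1/r'=1$ are used repeatedly in the forms $(p'-1)p=p'$, $q(q'-1)=q'$, $(r'-1)r=r'$) and correctly observing the block-collapse of the $\sum_i$ due to disjointness of the $A_i$. I would also note in passing that $h$ is genuinely well-defined (the denominators $\|G\mathbf{1}_{A_i}\|_{L_{p'}}$ are positive whenever the corresponding term is nonzero, since $\|G\|_{L_{p',q'}} \ne 0$ may be assumed, the trivial case $\eta = 0$ being immediate).
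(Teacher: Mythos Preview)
Your proposal is correct and follows essentially the same approach as the paper's proof: both compute $\|h\|_{\ell_r}$ pointwise using the conjugacy identities $(r'-1)r=r'$ and $r'/r+(p'-r')=p'-1$, then exploit the disjointness of the $A_i$ to collapse the inner sum on each block, and finally carry out the $L_p$ and $\ell_q$ computations via $(p'-1)p=p'$ and $q(q'-1)=q'$. Your remark that this mirrors the bookkeeping in the proof of Theorem~\ref{thm:dualitygrand} (with $s(g)$ replaced by $\|\eta\|_{\ell_{r'}}$) is exactly right, and your handling of the $r=1$ case via $\sum_k 2^{-(k+1)}=1$ matches the paper's one-line claim that equation~(\ref{eq:testestim}) still holds in that case.
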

\begin{proof}
 By definition, 
 \begin{eqnarray*}
 \|h\|_{\mathcal{K}(L_{p,q},\ell_r)} &=& \left(\sum_{j\ge0}\left(\int_{\mathbb{R}}\left(\sum_{k}|h_k|^r\right)^{\frac{p}{r}}\mathbf{1}_{A_j}\mathrm{d}\mathbb{P}\right)^{\frac{q}{p}}\right)^{\frac{1}{q}}\\
 &=&\left(\sum_{j\ge0}\left\|\|h\|_{\ell_r}\mathbf{1}_{A_j}\right\|^q_{L_p}\right)^{\frac{1}{q}}.
 \end{eqnarray*}
 Now for $r>1$, we obtain $$|h_k|^r=|\eta_k|^{r'}\frac{\|\eta\|_{\ell_{r'}}^{(p'-r')r}}{\|\eta\|_{\mathcal{K}(L_{p',q'},\ell_{r'})}^{(q'-1)r}}\left(\sum_{i\ge0}\frac{\mathbf{1}_{A_i}}{\left\|\|\eta\|_{\ell_{r'}}\mathbf{1}_{A_i}\right\|_{L_{p'}}^{p'-q'}}\right)^r$$ so that 
 $$\sum_k|h_k|^r = \sum_k|\eta_k|^{r'}\frac{\|\eta\|_{\ell_{r'}}^{(p'-r')r}}{\|\eta\|_{\mathcal{K}(L_{p',q'},\ell_{r'})}^{(q'-1)r}}\left(\sum_{i\ge0}\frac{\mathbf{1}_{A_i}}{\left\|\|\eta\|_{\ell_{r'}}\mathbf{1}_{A_i}\right\|_{L_{p'}}^{p'-q'}}\right)^r$$ and hence 
\begin{eqnarray*} 
\|h\|_{\ell_r}^r &=& \|\eta\|_{\ell_{r'}}^{r'}\frac{\|\eta\|_{\ell_{r'}}^{(p'-r')r}}{\|\eta\|_{\mathcal{K}(L_{p',q'},\ell_{r'})}^{(q'-1)r}}\left(\sum_{i\ge0}\frac{\mathbf{1}_{A_i}}{\left\|\|\eta\|_{\ell_{r'}}\mathbf{1}_{A_i}\right\|_{L_{p'}}^{p'-q'}}\right)^r\\
&=& \frac{\|\eta\|_{\ell_{r'}}^{r'+(p'-r')r}}{\|\eta\|_{\mathcal{K}(L_{p',q'},\ell_{r'})}^{(q'-1)r}}\left(\sum_{i\ge0}\frac{\mathbf{1}_{A_i}}{\left\|\|\eta\|_{\ell_{r'}}\mathbf{1}_{A_i}\right\|_{L_{p'}}^{p'-q'}}\right)^r\\
&=& \frac{\|\eta\|_{\ell_{r'}}^{(p'-1)r}}{\|\eta\|_{\mathcal{K}(L_{p',q'},\ell_{r'})}^{(q'-1)r}}\left(\sum_{i\ge0}\frac{\mathbf{1}_{A_i}}{\left\|\|\eta\|_{\ell_{r'}}\mathbf{1}_{A_i}\right\|_{L_{p'}}^{p'-q'}}\right)^r
\end{eqnarray*}
and then \begin{equation}\label{eq:testestim}\|h\|_{\ell_r} = \frac{\|\eta\|_{\ell_{r'}}^{p'-1}}{\|\eta\|_{\mathcal{K}(L_{p',q'},\ell_{r'})}^{q'-1}}\sum_{i\ge0}\frac{\mathbf{1}_{A_i}}{\left\|\|\eta\|_{\ell_{r'}}\mathbf{1}_{A_i}\right\|_{L_{p'}}^{p'-q'}}.\end{equation}
One can easily check that (\ref{eq:testestim}) also holds for $r=1$.
\vskip .2cm
As the $A_k$s are disjoint, we obtain
\begin{eqnarray*}
\|h\|_{\ell_r}\mathbf{1}_{A_j} &=& \frac{\|\eta\|_{\ell_{r'}}^{p'-1}}{\|\eta\|_{\mathcal{K}(L_{p',q'},\ell_{r'})}^{q'-1}}\sum_{i\ge0}\frac{\mathbf{1}_{A_i}}{\left\|\|\eta\|_{\ell_{r'}}\mathbf{1}_{A_i}\right\|_{L_{p'}}^{p'-q'}}\mathbf{1}_{A_j}\\
&=& \frac{\|\eta\|_{\ell_{r'}}^{p'-1}}{\|\eta\|_{\mathcal{K}(L_{p',q'},\ell_{r'})}^{q'-1}}\frac{\mathbf{1}_{A_j}}{\left\|\|\eta\|_{\ell_{r'}}\mathbf{1}_{A_j}\right\|_{L_{p'}}^{p'-q'}}.
\end{eqnarray*}
We now take the $L_p(\mathbb{R})$-norm of both sides. 
\begin{eqnarray*}
\int_{\mathbb{R}}\|h\|^p_{\ell_r}\mathbf{1}_{A_j}\mathrm{d}\mathbb{P} &= &\int_{\mathbb{R}}\frac{\|\eta\|_{\ell_{r'}}^{(p'-1)p}}{\|\eta\|_{\mathcal{K}(L_{p',q'},\ell_{r'})}^{(q'-1)p}}\frac{\mathbf{1}_{A_j}}{\left\|\|\eta\|_{\ell_{r'}}\mathbf{1}_{A_j}\right\|_{L_{p'}}^{(p'-q')p}}\mathrm{d}\mathbb{P}\\
& = &\frac{1}{\|\eta\|_{\mathcal{K}(L_{p',q'},\ell_{r'})}^{(q'-1)p}}\int_{\mathbb{R}}\frac{\|\eta\|_{\ell_{r'}}^{(p'-1)p}\mathbf{1}_{A_j}}{\left\|\|\eta\|_{\ell_{r'}}\mathbf{1}_{A_j}\right\|_{L_{p'}}^{(p'-q')p}}\mathrm{d}\mathbb{P}\\
& = & \frac{1}{\|\eta\|_{\mathcal{K}(L_{p',q'},\ell_{r'})}^{(q'-1)p}}\frac{1}{\left\|\|\eta\|_{\ell_{r'}}\mathbf{1}_{A_j}\right\|_{L_{p'}}^{(p'-q')p}}\int_{\mathbb{R}}\|\eta\|_{\ell_{r'}}^{p'}\mathbf{1}_{A_j}\mathrm{d}\mathbb{P}\\
& = &  \frac{1}{\|\eta\|_{\mathcal{K}(L_{p',q'},\ell_{r'})}^{(q'-1)p}}\frac{\left\|\|\eta\|_{\ell_{r'}}\mathbf{1}_{A_j}\right\|_{L_{p'}}^{p'}}{\left\|\|\eta\|_{\ell_{r'}}\mathbf{1}_{A_j}\right\|_{L_{p'}}^{(p'-q')p}}\\
& = & \frac{1}{\|\eta\|_{\mathcal{K}(L_{p',q'},\ell_{r'})}^{(q'-1)p}}\left\|\|\eta\|_{\ell_{r'}}\mathbf{1}_{A_j}\right\|_{L_{p'}}^{(q'-1)p}.
\end{eqnarray*}
Therefore $$\left\|\|h\|_{\ell_r}\mathbf{1}_{A_j}\right\|_{L_p}=  \frac{1}{\|\eta\|_{\mathcal{K}(L_{p',q'},\ell_{r'})}^{q'-1}}\left\|\|\eta\|_{\ell_{r'}}\mathbf{1}_{A_j}\right\|_{L_{p'}}^{q'-1}.$$ Hence 
\begin{eqnarray*}
\sum_{j\ge0}\left\|\|h\|_{\ell_r}\mathbf{1}_{A_j}\right\|^q_{L_p} &=&  \sum_{j\ge0}\frac{1}{\|\eta\|_{\mathcal{K}(L_{p',q'},\ell_{r'})}^{(q'-1)q}}\left\|\|\eta\|_{\ell_{r'}}\mathbf{1}_{A_j}\right\|_{L_{p'}}^{(q'-1)q}\\
&=&\frac{1}{\|\eta\|_{\mathcal{K}(L_{p',q'},\ell_{r'})}^{q'}} \sum_{j\ge0}\left\|\|\eta\|_{\ell_{r'}}\mathbf{1}_{A_j}\right\|_{L_{p'}}^{q'}
 \end{eqnarray*}
 and then $$\sum_{j\ge0}\left\|\|h\|_{\ell_r}\mathbf{1}_{A_j}\right\|^q_{L_p}=\frac{1}{\|\eta\|_{\mathcal{K}(L_{p',q'},\ell_{r'})}^{q'}} \|\eta\|_{\mathcal{K}(L_{p',q'},\ell_{r'})}^{q'}=1.$$
Therefore $$\|h\|_{\mathcal{K}(L_{p,q},\ell_r)}=\left(\sum_{j\ge0}\left\|\|h\|_{\ell_r}\mathbf{1}_{A_j}\right\|^q_{L_p}\right)^{\frac{1}{q}}=1.$$ Thus $h=(h_k)_{k\ge0}\in\mathcal{K}(L_{p,q},\ell_r)$ since $h=(h_k)_{k\ge0}$ is measurable. 
\end{proof}
The following Proposition characterizes the dual of $\mathcal{K}(L_{p,q},\ell_r).$
\begin{proposition}\label{pro1}
For $1< p,q<\infty$ and $1\le r<\infty$, the dual space, $\mathcal{K}(L_{p,q},\ell_r)^*,$ of $\mathcal{K}(L_{p,q},\ell_r)$ is $\mathcal{K}(L_{p',q'},\ell_{r'})$ where $$\frac{1}{p} +\frac{1}{p'}=1,\quad\frac{1}{q} +\frac{1}{q'}=1,\quad\frac{1}{r} +\frac{1}{r'}=1.$$
\end{proposition}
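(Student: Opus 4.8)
The plan is to exhibit the bilinear pairing $\langle\epsilon,\eta\rangle:=\mathbb{E}(\sum_{n\ge 0}\epsilon_n\eta_n)$ as an isomorphism of $\mathcal{K}(L_{p',q'},\ell_{r'})$ onto $\mathcal{K}(L_{p,q},\ell_r)^{*}$. The inclusion $\mathcal{K}(L_{p',q'},\ell_{r'})\hookrightarrow\mathcal{K}(L_{p,q},\ell_r)^{*}$ is the easy half: for $\eta\in\mathcal{K}(L_{p',q'},\ell_{r'})$ one applies H\"older's inequality three times, in the same order as in the first part of the proof of Theorem \ref{thm:dualitygrand} --- first the $\ell_r$--$\ell_{r'}$ inequality pointwise on $\mathbb{R}$, to bound $\sum_n|\epsilon_n||\eta_n|$ by $\|\epsilon\|_{\ell_r}\|\eta\|_{\ell_{r'}}$; then, after splitting $\mathbb{R}=\bigcup_j A_j$, the $L_p$--$L_{p'}$ inequality on each $A_j$; and finally the $\ell_q$--$\ell_{q'}$ inequality over $j\in\mathbb{Z}$. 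This yields $|\langle\epsilon,\eta\rangle|\le\|\epsilon\|_{\mathcal{K}(L_{p,q},\ell_r)}\|\eta\|_{\mathcal{K}(L_{p',q'},\ell_{r'})}$, so $\eta\mapsto\langle\cdot,\eta\rangle$ is bounded with norm at most $\|\eta\|_{\mathcal{K}(L_{p',q'},\ell_{r'})}$.

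For the converse, fix $\Phi\in\mathcal{K}(L_{p,q},\ell_r)^{*}$ and reconstruct its kernel coordinate by coordinate. For each $n$, if $g\,e_n$ denotes the process whose only nonzero entry is $g$ in slot $n$, then $\|g\,e_n\|_{\mathcal{K}(L_{p,q},\ell_r)}=\|\,|g|\,\|_{L_{p,q}}=\|g\|_{L_{p,q}}$, so $g\mapsto\Phi(g\,e_n)$ is a bounded functional on $L_{p,q}(\mathbb{R})$ of norm at most $\|\Phi\|$. By the classical amalgam duality $L_{p,q}(\mathbb{R})^{*}=L_{p',q'}(\mathbb{R})$, valid for $1<p,q<\infty$ (see the amalgam references cited in the introduction, e.g. \cite{Fournier,Holland}), there is $\eta_n\in L_{p',q'}(\mathbb{R})$ with $\Phi(g\,e_n)=\mathbb{E}(g\eta_n)$ and $\|\eta_n\|_{p',q'}\le\|\Phi\|$; by linearity $\Phi(\epsilon)=\mathbb{E}(\sum_{k\le N}\epsilon_k\eta_k)$ for every process $\epsilon$ supported on $\{0,\dots,N\}$. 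It then suffices to show $\eta:=(\eta_n)_{n\ge 0}\in\mathcal{K}(L_{p',q'},\ell_{r'})$ with $\|\eta\|_{\mathcal{K}(L_{p',q'},\ell_{r'})}\lesssim\|\Phi\|$: granting this, the identity $\Phi(\epsilon)=\langle\epsilon,\eta\rangle$ extends from the finitely supported processes --- which are dense in $\mathcal{K}(L_{p,q},\ell_r)$ by dominated convergence applied to $\|\epsilon\|_{\ell_r}$, using $q<\infty$ --- to all of $\mathcal{K}(L_{p,q},\ell_r)$, both sides being continuous.

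To control $\|\eta\|_{\mathcal{K}(L_{p',q'},\ell_{r'})}$ I would test $\Phi$ against the explicit element produced by Lemma \ref{lemma1}. For each $N$, apply that lemma to the truncation $\eta^{(N)}=(\eta_0,\dots,\eta_N,0,0,\dots)$, which lies in $\mathcal{K}(L_{p',q'},\ell_{r'})$ since it has finitely many coordinates, each in $L_{p',q'}(\mathbb{R})$; this yields a process $h^{(N)}$ supported on $\{0,\dots,N\}$ with $\|h^{(N)}\|_{\mathcal{K}(L_{p,q},\ell_r)}=1$ and, by the very way $h^{(N)}$ is built, $\langle h^{(N)},\eta\rangle=\langle h^{(N)},\eta^{(N)}\rangle$ comparable to $\|\eta^{(N)}\|_{\mathcal{K}(L_{p',q'},\ell_{r'})}$. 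Indeed, for $r>1$ one has $h^{(N)}_k\eta_k=|\eta_k|^{r'}$ times a factor free of $k$, so the sum over $k$ turns $|\eta_k|^{r'}$ into $\|\eta^{(N)}\|_{\ell_{r'}}^{r'}$, and the remaining $A_j$-summation then telescopes exactly as in the second part of the proof of Theorem \ref{thm:dualitygrand} to give $\langle h^{(N)},\eta\rangle=\|\eta^{(N)}\|_{\mathcal{K}(L_{p',q'},\ell_{r'})}$. Since $\langle h^{(N)},\eta\rangle=\Phi(h^{(N)})\le\|\Phi\|$, this bounds $\|\eta^{(N)}\|_{\mathcal{K}(L_{p',q'},\ell_{r'})}$ by $C\|\Phi\|$ uniformly in $N$, and letting $N\to\infty$ with monotone convergence ($\|\eta^{(N)}\|_{\ell_{r'}}\uparrow\|\eta\|_{\ell_{r'}}$) gives the required bound on $\|\eta\|_{\mathcal{K}(L_{p',q'},\ell_{r'})}$. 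Together with the first half this identifies $\mathcal{K}(L_{p,q},\ell_r)^{*}$ with $\mathcal{K}(L_{p',q'},\ell_{r'})$, isometrically when $r>1$.

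The main obstacle is the endpoint $r=1$, hence $r'=\infty$: the $\ell_1$--$\ell_\infty$ H\"older inequality admits no genuine extremizer, so the weighted test element (the $r=1$ branch of Lemma \ref{lemma1}) does not by itself produce a lower bound, and it has to be supplemented by a measurable-selection argument. Concretely, for any measurable index function $k(\cdot)$ the process $\epsilon$ with $\epsilon_n=g\,\mathbf{1}_{\{k(\cdot)=n\}}$ satisfies $\|\epsilon\|_{\mathcal{K}(L_{p,q},\ell_1)}=\|g\|_{L_{p,q}}$ and $\Phi(\epsilon)=\mathbb{E}(g\,\eta_{k(\cdot)})$, whence $\|\eta_{k(\cdot)}\|_{L_{p',q'}}\le\|\Phi\|$ for every selector; choosing $k(\cdot)$ to realize $\max_{n\le N}|\eta_n|$ and letting $N\to\infty$ gives $\|\sup_n|\eta_n|\|_{L_{p',q'}}\le\|\Phi\|$. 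The remaining ingredients --- the scalar amalgam duality and the density of finitely supported processes --- are standard. A more structural alternative is to observe that $\mathcal{K}(L_{p,q},\ell_r)=(\bigoplus_{j\in\mathbb{Z}}\mathcal{K}(L_p(A_j),\ell_r))_{\ell_q}$ and to combine the classical duality of $\mathcal{K}(L_p,\ell_r)$ from \cite{Ferenc} with the duality of $\ell_q$-direct sums; this route, too, needs separate attention exactly at $r=1$.
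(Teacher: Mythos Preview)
Your proof is correct, and the easy direction coincides with the paper's. In the converse direction the two arguments diverge. To produce the representing sequence $\eta$, the paper does not build it coordinate-by-coordinate via the scalar amalgam duality $L_{p,q}^*=L_{p',q'}$ as you do; instead it assumes $q<p$, uses the continuous embedding $\mathcal{K}(L_{p,q},\ell_r)\hookrightarrow\mathcal{K}(L_p,\ell_r)$ to Hahn--Banach extend the functional to $\mathcal{K}(L_p,\ell_r)$, and then invokes the classical duality $\mathcal{K}(L_p,\ell_r)^*=\mathcal{K}(L_{p'},\ell_{r'})$ from \cite[Lemma~2.9]{Ferenc} to obtain $\eta$. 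From that point on the two proofs reconverge: both test against the element of Lemma~\ref{lemma1} to get the lower bound $\|\eta\|_{\mathcal{K}(L_{p',q'},\ell_{r'})}\lesssim\|\Lambda\|$ when $r>1$. Your route is more self-contained (it does not import the $p=q$ case as a black box) and, notably, does not need any ordering hypothesis between $p$ and $q$, whereas the paper's reduction ``we can suppose that $q<p$'' is stated without justification. At the endpoint $r=1$ your measurable-selection argument is also the cleaner one: the paper bounds $\sum_k|\eta_k|/2^{k+1}$ below by appealing to a \emph{single} index $k_0$ with $\tfrac12\|\eta\|_{\ell_\infty}\le|\eta_{k_0}|$, which is delicate because the near-maximizing index varies with the base point; your selector $k(\cdot)$ realizing $\max_{n\le N}|\eta_n|$ addresses exactly this issue and gives $\|\sup_n|\eta_n|\|_{L_{p',q'}}\le\|\Phi\|$ directly.
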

\begin{proof}
Let $\eta=(\eta_k)_{k\ge0}\in\mathcal{K}(L_{p',q'},\ell_{r'})$ and $\epsilon=(\epsilon_k)_{k\ge0}\in\mathcal{K}(L_{p,q},\ell_r).$ Let $\langle\cdot,\cdot\rangle$ be the usual inner product, that is, $$\langle\eta,\epsilon\rangle=\sum_k\eta_k\epsilon_k.$$ 
and define the functional, $\Lambda,$ by $$\Lambda_{\eta}(\epsilon)=\mathbb{E}\langle\eta,\epsilon\rangle=\int_{\mathbb{R}}\sum_{k\ge0}\epsilon_k\eta_k\mathrm{d}\mathbb{P}=\sum_{j\ge0}\int_{A_j}\sum_{k\ge0}\epsilon_k\eta_k\mathrm{d}\mathbb{P}$$ for all $\eta=(\eta_k)_{k\ge0}\in\mathcal{K}(L_{p',q'},\ell_{r'})$ measurable and $\epsilon=(\epsilon_k)_{k\ge0}\in\mathcal{K}(L_{p,q},\ell_r).$ 
Then by H\"older inequality,
\begin{eqnarray}\label{gg1}
|\Lambda_{\eta}(\epsilon)|=\left|\sum_{j\ge0}\int_{A_j}\sum_{k\ge0}\epsilon_k\eta_k\mathrm{d}\mathbb{P}\right|\le \|\epsilon\|_{\mathcal{K}(L_{p,q},\ell_r)}\|\eta\|_{\mathcal{K}(L_{p',q'},\ell_{r'})}.
\end{eqnarray} 
and since $\Lambda_{\eta}(\cdot)$ is linear and bounded, it is a continuous linear functional on $\mathcal{K}(L_{p,q},\ell_r).$ From inequality (\ref{gg1}), we deduce that $\Lambda_\eta\in \left(\mathcal{K}(L_{p,q},\ell_r)\right)'$ and
\begin{eqnarray}\label{gg2}
\|\Lambda_{\eta}\|\le \|\eta\|_{\mathcal{K}(L_{p',q'},\ell_{r'})}.
\end{eqnarray}

For the converse, we can suppose that $q<p$. Let $\Lambda$ be a continuous linear functional on $\mathcal{K}(L_{p,q},\ell_r).$ Then as $\mathcal{K}(L_{p,q},\ell_r)$ embeds continuously into $\mathcal{K}(L_{p},\ell_r)$ (since $q<p$), we have by Hahn-Banach Theorem that $\Lambda$ can be extended to a continuous linear functional $\tilde{\Lambda}$ on $\mathcal{K}(L_{p},\ell_r)$ having the same operator norm as $\Lambda.$ It follows from (\cite[Lemma 2.9]{Ferenc}) that there exists some $\eta\in\mathcal{K}(L_{p'},\ell_{r'})$ such that $$\tilde{\Lambda}_{\eta}(\epsilon)=\mathbb{E}\langle\eta,\epsilon\rangle$$ for all $\epsilon\in\mathcal{K}(L_{p},\ell_r).$ In particular $$\Lambda_{\eta}(\epsilon)=\tilde{\Lambda}_{\eta}(\epsilon)=\mathbb{E}\langle\eta,\epsilon\rangle$$ for all $\epsilon\in\mathcal{K}(L_{p,q},\ell_r).$ Let us now show that $$\|\eta\|_{\mathcal{K}(L_{p',q'},\ell_{r'})}\lesssim \sup_{ \epsilon\in\mathcal{K}(L_{p,q},\ell_r),\, \|\epsilon\|_{\mathcal{K}(L_{p,q},\ell_r)}\le1}|\Lambda_{\eta}(\epsilon)|.$$ 
Set $h=(h_k)_{k\ge0}$ to be the sequence defined in Lemma \ref{lemma1}. Since $h=(h_k)_{k\ge0}\in\mathcal{K}(L_{p,q},\ell_r)$ with a unit norm, by linearity of the expectation operator, we have that for $1<r<\infty$,
\begin{eqnarray*}
\|\Lambda\| &\ge& |\Lambda_{\eta}(h)|=\frac{1}{\|\eta\|_{\mathcal{K}(L_{p',q'},\ell_{r'})}^{q'-1}}\mathbb{E}\left(\left(\sum_{k\ge0}|\eta_k|^{r'}\right)^{\frac{p'}{r'}}\sum_{j\ge0}\frac{\mathbf{1}_{A_j}}{\left\|\|\eta\|_{\ell_{r'}}\mathbf{1}_{A_j}\right\|_{L_{p'}}^{p'-q'}}\right) \\
&=&\frac{1}{\|\eta\|_{\mathcal{K}(L_{p',q'},\ell_{r'})}^{q'-1}}\mathbb{E}\left(\|\eta\|_{\ell_{r'}}^{p'}\sum_{j\ge0}\frac{\mathbf{1}_{A_j}}{\left\|\|\eta\|_{\ell_{r'}}\mathbf{1}_{A_j}\right\|_{L_{p'}}^{p'-q'}}\right) \\
&=&\frac{1}{\|\eta\|_{\mathcal{K}(L_{p',q'},\ell_{r'})}^{q'-1}}\sum_{j\ge0}\mathbb{E}\left(\frac{\|\eta\|_{\ell_{r'}}^{p'}\mathbf{1}_{A_j}}{\left\|\|\eta\|_{\ell_{r'}}\mathbf{1}_{A_j}\right\|_{L_{p'}}^{p'-q'}}\right) \\
&=&\frac{1}{\|\eta\|_{\mathcal{K}(L_{p',q'},\ell_{r'})}^{q'-1}}\sum_{j\ge0}\frac{\mathbb{E}(\|\eta\|_{\ell_{r'}}^{p'}\mathbf{1}_{A_j})}{\left\|\|\eta\|_{\ell_{r'}}\mathbf{1}_{A_j}\right\|_{L_{p'}}^{p'-q'}} \\
&=&\frac{1}{\|\eta\|_{\mathcal{K}(L_{p',q'},\ell_{r'})}^{q'-1}}\left[\sum_{j\in\mathbb{Z}}\frac{\left\|\|\eta\|_{\ell_{r'}}\mathbf{1}_{A_j}\right\|_{L_{p'}}^{p'}}{\left\|\|\eta\|_{\ell_{r'}}\mathbf{1}_{A_j}\right\|_{L_{p'}}^{p'-q'}}\right] \\
&=&\frac{1}{\|\eta\|_{\mathcal{K}(L_{p',q'},\ell_{r'})}^{q'-1}}\left[\sum_{j\in\mathbb{Z}}\left\|\|\eta\|_{\ell_{r'}}\mathbf{1}_{A_j}\right\|_{L_{p'}}^{q'}\right]. 
\end{eqnarray*}
Therefore
\begin{eqnarray*}
\|\Lambda\|&\ge&\frac{1}{\|\eta\|_{\mathcal{K}(L_{p',q'},\ell_{r'})}^{q'-1}}\|\eta\|_{\mathcal{K}(L_{p',q'},\ell_{r'})}^{q'} = \|\eta\|_{\mathcal{K}(L_{p',q'},\ell_{r'})}. 
\end{eqnarray*}
Thus
\begin{eqnarray}\label{gg3}
\| \Lambda \| \ge \|\eta\|_{\mathcal{K}(L_{p',q'},\ell_{r'})}
\end{eqnarray}
In the case of $r=1$, we note that there is and integer $k_0$ such that $$\frac 12\|\eta\|_{\ell_\infty}\le |\eta_{k_0}|.$$ Thus using the test function $h$ defined in Lemma \ref{lemma1} for $r=1$, and following the steps above, we obtain
\begin{eqnarray*}
\|\Lambda\| &\ge& |\Lambda_{\eta}(h)|=\frac{1}{\|\eta\|_{\mathcal{K}(L_{p',q'},\ell_{r'})}^{q'-1}}\mathbb{E}\left(\left(\sum_{k\ge0}\frac{|\eta_k|}{2^{k+1}}\right)\sum_{j\ge0}\frac{\|\eta\|_{\ell_{r'}}^{p'-1}\mathbf{1}_{A_j}}{\left\|\|\eta\|_{\ell_{r'}}\mathbf{1}_{A_j}\right\|_{L_{p'}}^{p'-q'}}\right) \\
&\ge&\frac{1}{2^{k_0+2}\|\eta\|_{\mathcal{K}(L_{p',q'},\ell_{r'})}^{q'-1}}\mathbb{E}\left(\|\eta\|_{\ell_{r'}}^{p'}\sum_{j\ge0}\frac{\mathbf{1}_{A_j}}{\left\|\|\eta\|_{\ell_{r'}}\mathbf{1}_{A_j}\right\|_{L_{p'}}^{p'-q'}}\right)\\ &=& \frac{1}{2^{k_0+2}}\|\eta\|_{\mathcal{K}(L_{p',q'},\ell_{r'})}.
\end{eqnarray*}
The proof is complete.
\end{proof}
We then see that $(\mathcal{K}(L_{p,q},\ell_{1}))^*=\mathcal{K}(L_{p',q'},\ell_{\infty})$ and thus it is now evident that the dual of the variation integrable space is the jump bounded space. More rigorously, we prove Theorem \ref{Garcia}.
\begin{proof}[Proof of Theorem \ref{Garcia}]
Let $g\in\mathcal{BD}_{p',q'}(\mathbb{R})$ and set  $$\kappa_g(f)=\sum_{k=1}^{\infty}\mathbb{E}[d_kfd_kg]\qquad\mbox{for}\,\,f\in\mathcal{G}_{p,q}(\mathbb{R}).$$ 
We obtain
\begin{eqnarray*}
|\kappa_g(f)|&=&\left|\sum_{k=1}^{\infty}\mathbb{E}[d_kfd_kg]\right|\\
&\le& \sum_{k=1}^{\infty}\mathbb{E}[|d_kf| |d_kg|] \le \mathbb{E}\sum_{k=1}^{\infty}[|d_kf| \sup_{k\in\mathbb{N}}|d_kg|]\\
&=&\int_{\mathbb{R}}\sum_{k=1}^{\infty}|d_kf| \sup_{k\in\mathbb{N}}|d_kg|\mathrm{d}\mathbb{P}\\
&=&\sum_{j\in\mathbb{Z}}\int_{A_j}\sum_{k=1}^{\infty}|d_kf| \sup_{k\in\mathbb{N}}|d_kg|\mathrm{d}\mathbb{P}\\
&\le&\sum_{j\in\mathbb{Z}}\left[\int_{A_j}\left(\sum_{k=1}^{\infty}|d_kf|\right)^p\mathrm{d}\mathbb{P}\right]^{\frac{1}{p}}\left[\int_{A_j} \sup_{k\in\mathbb{N}}|d_kg|^{p'}\mathrm{d}\mathbb{P}\right]^{\frac{1}{p'}}\\
&\le&\left\{\sum_{j\in\mathbb{Z}}\left[\int_{A_j}\left(\sum_{k=1}^{\infty}|d_kf|\right)^p\mathrm{d}\mathbb{P}\right]^{\frac{q}{p}}\right\}^{\frac{1}{q}}\left\{\sum_{j\in\mathbb{Z}}\left[\int_{A_j} \sup_{k\in\mathbb{N}}|d_kg|^{p'}\mathrm{d}\mathbb{P}\right]^{\frac{q'}{p'}}\right\}^{\frac{1}{q'}}.
\end{eqnarray*}
That is
\begin{eqnarray*}
|\kappa_g(f)|&\le&\left\|\sum_{n=0}^{\infty}|d_nf|\right\|_{p,q}\left\|\sup_{n\in\mathbb{N}}|d_ng|\right\|_{p',q'}=\|f\|_{\mathcal{G}_{p,q}(\mathbb{R})}\|g\|_{\mathcal{BD}_{p',q'}(\mathbb{R})}.
\end{eqnarray*}
Therefore $\kappa_g\in \left(\mathcal{G}_{p,q}(\mathbb{R})\right)'$ and $$\|\kappa_g\|\le \|g\|_{\mathcal{BD}_{p',q'}(\mathbb{R})}.$$

To prove the converse, we first assume that $\tau$ is an arbitrary element in the dual of $\mathcal{G}_{p,q}(\mathbb{R})$ then we show that there exists $g\in\mathcal{BD}_{p',q'}(\mathbb{R})$ such that $\tau=\kappa_g$ and $\|g\|_{\mathcal{BD}_{p',q'}}\le C\|\tau\|$ for some constant $C.$ 
\vskip .1cm
By setting $\epsilon_k=d_kf$ for $f\in\mathcal{M}$ we saw earlier that  $\mathcal{G}_{p,q}(\mathbb{R})\subseteq\mathcal{K}(L_{p,q},\ell_1).$ We also recall that the dual space of $\mathcal{K}(L_{p,q},\ell_1)$ is $\mathcal{K}(L_{p',q'},\ell_{\infty})$ and $\tau$ is a continuous linear functional on $\mathcal{G}_{p,q}(\mathbb{R})\subseteq\mathcal{K}(L_{p,q},\ell_1).$ By Hahn-Banach Theorem, $\tau$ can be extended to a continuous linear functional on $\mathcal{K}(L_{p,q},\ell_1)$ having the same operator norm as $\tau.$ Let $\Lambda$ be this extension of $\tau.$ Then we have by Proposition \ref{pro1} that there exists $\eta\in\mathcal{K}(L_{p',q'},\ell_{\infty})$ such that $$\|\Lambda\|=\|\tau\|\approx\|\eta\|_{\mathcal{K}(L_{p',q'},\ell_{\infty})}\quad\mbox{and}\quad\Lambda_{\eta}(\epsilon)=\sum_{k\ge0}\mathbb{E}(\epsilon_k\eta_k)$$ for $\epsilon\in\mathcal{K}(L_{p,q},\ell_1).$ Hence 
\begin{eqnarray}\label{tau1}
\tau(f_n) = \sum_{k=1}^n\mathbb{E}[(d_kf)\eta_k]=\sum_{k=1}^n\mathbb{E}[(d_kf)(\mathbb{E}_k\eta_k-\mathbb{E}_{k-1}\eta_k)]
\end{eqnarray}
is well defined (we agree for a moment to work with $f_n$ as we will show that $f_n\to f$ in $\mathcal{G}_{p,q}$ as $n\to\infty$).
We shall now set $$g_n:=\left\{\begin{array}{l}\sum_{k=1}^n[\mathbb{E}_k\eta_k-\mathbb{E}_{k-1}\eta_k]\quad\textrm{if}\quad n\neq 0\\g_0=0\end{array}\right.$$ and show that $g_n$ is a bounded martingale. Indeed we observe that as $k\to\infty,\,\,\mathbb{E}_k\eta_k-\mathbb{E}_{k-1}\eta_k\to 0.$ Thus $g_n$ is finite (i.e. $\sup_n|g_n|<\infty$) and also $$\mathbb{E}_{n-1}(g_n-g_{n-1})=\mathbb{E}_{n-1}(\mathbb{E}_n\eta_n-\mathbb{E}_{n-1}\eta_n)=\mathbb{E}_{n-1}\eta_n-\mathbb{E}_{n-1}\eta_n=0.$$This means that $g_n$ is a martingale. Consequently, since $$\sup_{k\in\mathbb{N}}|d_kg|=\sup_{k\in\mathbb{N}}|\mathbb{E}_k\eta_k-\mathbb{E}_{k-1}\eta_k|\le\sup_{k\in\mathbb{N}}(|\mathbb{E}_k|\eta_k|+\mathbb{E}_{k-1}|\eta_k|)\le2\sup_{n\in\mathbb{N}}\mathbb{E}_n\left(\sup_{k\in\mathbb{N}}|\eta_k|\right),$$ we can invoke inequality (\ref{doob1}) to obtain 
\begin{eqnarray*}
\|\sup_{k\in\mathbb{N}}|d_kg|\|_{p',q'}&\le&2\left\|\sup_{n\in\mathbb{N}}\mathbb{E}_n\left(\sup_{k\in\mathbb{N}}|\eta_k|\right)\right\|_{p',q'}\le\frac{2p'}{p'-1} \sup_{k\in\mathbb{N}}\left\|\mathbb{E}_n\left(\sup_{k\in\mathbb{N}}|\eta_k|\right)\right\|_{p',q'}\\
&\le&\frac{2p'}{p'-1}\left\|\sup_{k\in\mathbb{N}}|\eta_k|\right\|_{p',q'}=\frac{2p'}{p'-1}\|\eta\|_{\mathcal{K}(L_{p',q'},\ell_{\infty})}.
\end{eqnarray*}
Hence $g\in\mathcal{BD}_{p',q'}(\mathbb{R})$ and $\|g\|_{\mathcal{BD}_{p',q'}(\mathbb{R})}\le C\|\tau\|$ since $\|\tau\|=\|\eta\|_{\mathcal{K}(L_{p',q'},\ell_{\infty})}.$ 
\vskip .1cm
We now show that $f_n\to f$ in $\mathcal{G}_{p,q}$ as $n\to\infty.$
We observe that since $f=\sum_{k=0}^{\infty}f_k-f_{k-1},$ we have that $$f_n-f = f_n-\sum_{k=0}^{\infty}f_k-f_{k-1}= f_n-\sum_{k=0}^nf_k-f_{k-1}-\sum_{k=n+1}^{\infty}f_k-f_{k-1}=-\sum_{k=n+1}^{\infty}f_k-f_{k-1}.$$ Hence for $n\to\infty,\,\,f_n-f\to0.$ Also
\begin{eqnarray*}
\sum_{k\ge0}|d_k(f_n-f)|&=&\sum_{k\ge0}|d_kf_n-d_kf|\le2\sum_{k\ge0}\left||d_kf_n|+|d_kf|\right|\le4\sum_{k\ge0}|d_kf|<\infty
\end{eqnarray*}
since $f\in\mathcal{G}_{p,q}$ and thus by the Dominated Convergence Theorem $$\|f_n-f\|_{\mathcal{G}_{p,q}(\mathbb{R})}=\left\|\sum_{k\ge0}|d_k(f_n-f)|\right\|_{L_{p,q}(\mathbb{R})}\to0\quad\mbox{for}\,\,n\to\infty.$$ That is $f_n\to f$ in $\mathcal{G}_{p,q}(\mathbb{R}).$ Consequently, from equation (\ref{tau1}), as $n\to\infty,$ we have that, by setting $\eta_k=d_kg,$ $$\tau(f_n) = \sum_{k=1}^n\mathbb{E}[(d_kf)\eta_k]\to\tau(f) = \sum_{k=1}^{\infty}\mathbb{E}[(d_kf)\eta_k]=\kappa_g(f).$$ Hence $\|g\|_{\mathcal{BD}_{p',q'}(\mathbb{R})}\le C\|\tau\|=C\|\kappa\|$ and the poof is complete.
\end{proof}

\section{Further discussions and conclusion}\label{sec6}
In this work, we were able to to characterize the dual of $H^s_{p,q}$ for $1<p\le q<\infty.$ We were also able to extend the Doob's inequality and the Burkholder-Davis-Gundy inequality from the classical Lebesgue space to the amalgam space.  We have discussed  the various relations between the martingale Hardy-amalgam spaces and showed that upon the condition of regular basis, the spaces are all equivalent. We have also introduced the variation integrable space relative to our setting and characterized its dual space. 
\vskip .2cm
We observe that the Davis' decomposition is also valid in the martingale Hardy-amalgam spaces. Indeed, we have the following decomposition which can be proved as in the case $p=q$ (see \cite[Lemma 2.13]{Ferenc})
\begin{theorem}\label{thm:dav1}
Let $f=\{f_n\}_{n\in\mathbb{N}}\in H^S_{p,q}\,\,(1\le p<\infty,\,\,0<q<\infty).$ Then there exists $h=\{h_n\}_{n\in\mathbb{N}}\in\mathcal{G}_{p,q}$ and $g=\{g_n\}_{n\in\mathbb{N}}\in\mathcal{Q}_{p,q}$ such that $f=\{f_n=h_n+g_n\}_{n\in\mathbb{N}}$ for all $n\in\mathbb{N}$ and $$\|h\|_{\mathcal{G}_{p,q}}\le (2+2C)\|f\|_{H^S_{p,q}}$$ and $$\|g\|_{\mathcal{Q}_{p,q}}\le (7+2C)\|f\|_{H^S_{p,q}}.$$
\end{theorem}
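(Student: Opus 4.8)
The strategy is to apply the classical Davis decomposition of \cite[Lemma 2.13]{Ferenc} to $f$ itself, check that this construction is \emph{local} with respect to the disjoint cover $(A_j)_{j\in\mathbb{Z}}$, and then pass from the classical $L_p$-estimates to the amalgam norm by slicing along the $A_j$'s, exactly as in the proof of Theorem \ref{thm:mt1}. Recall that the classical construction yields $g=(g_n)_{n\ge 0}$ and $h=(h_n)_{n\ge 0}$ with $f_n=g_n+h_n$, in which $d_nh$ and $d_ng$ are assembled purely from the differences $d_nf$, the partial sums $S_{n-1}(f)$ (through a truncation of the form $\mathbf{1}_{\{|d_nf|>2S_{n-1}(f)\}}$) and conditional expectations $\mathbb{E}_{n-1}$; it furthermore provides a predictable sequence $\varrho=(\varrho_n)_{n\ge 0}\in\rho$ with $S_n(g)\le\varrho_{n-1}$, together with the per-point (hence per-$A_j$) $L_p$ bounds $\|\sum_n|d_nh|\|_{L_p}\le(2+2C)\|S(f)\|_{L_p}$ and $\|\varrho_\infty\|_{L_p}\le(7+2C)\|S(f)\|_{L_p}$ for $1\le p<\infty$, where $C$ is the constant of \cite[Lemma 2.13]{Ferenc}.

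First I would record the locality of the construction. Since $A_j\in\mathcal{F}_n$ for all $n$, Lemma \ref{lem:idmartingale} gives $f\mathbf{1}_{A_j}\in\mathcal{M}$ with $d_n(f\mathbf{1}_{A_j})=(d_nf)\mathbf{1}_{A_j}$ and $S_n(f\mathbf{1}_{A_j})=S_n(f)\mathbf{1}_{A_j}$; moreover the truncation set localizes, $\mathbf{1}_{\{|d_n(f\mathbf{1}_{A_j})|>2S_{n-1}(f\mathbf{1}_{A_j})\}}=\mathbf{1}_{\{|d_nf|>2S_{n-1}(f)\}}\mathbf{1}_{A_j}$ (check it on and off $A_j$ separately), and $\mathbb{E}_{n-1}(\cdot\,\mathbf{1}_{A_j})=\mathbf{1}_{A_j}\,\mathbb{E}_{n-1}(\cdot)$ because $\mathbf{1}_{A_j}$ is $\mathcal{F}_{n-1}$-measurable. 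Substituting these identities into the formulas for $d_nh$, $d_ng$ and $\varrho_n$ shows that the Davis decomposition of $f\mathbf{1}_{A_j}$ is exactly $(h_n\mathbf{1}_{A_j})_{n\ge 0}+(g_n\mathbf{1}_{A_j})_{n\ge 0}$, with predictable dominating sequence $(\varrho_n\mathbf{1}_{A_j})_{n\ge 0}$.

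The amalgam estimates then follow mechanically. Applying the classical bound to $f\mathbf{1}_{A_j}$ and using $\sum_n|d_n(h\mathbf{1}_{A_j})|=(\sum_n|d_nh|)\mathbf{1}_{A_j}$ and $S(f\mathbf{1}_{A_j})=S(f)\mathbf{1}_{A_j}$, then raising to the power $q$ and summing over $j$, gives $\|h\|_{\mathcal{G}_{p,q}}^q=\sum_j\|(\sum_n|d_nh|)\mathbf{1}_{A_j}\|_{L_p}^q\le(2+2C)^q\sum_j\|S(f)\mathbf{1}_{A_j}\|_{L_p}^q=(2+2C)^q\|f\|_{H^S_{p,q}}^q$; similarly, testing the definition of $\mathcal{Q}_{p,q}$ with the single admissible sequence $\varrho$ (whose localizations $\varrho\mathbf{1}_{A_j}$ are admissible for $g\mathbf{1}_{A_j}$) gives $\|g\|_{\mathcal{Q}_{p,q}}^q\le\|\varrho_\infty\|_{p,q}^q=\sum_j\|\varrho_\infty\mathbf{1}_{A_j}\|_{L_p}^q\le(7+2C)^q\sum_j\|S(f)\mathbf{1}_{A_j}\|_{L_p}^q=(7+2C)^q\|f\|_{H^S_{p,q}}^q$. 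The range $0<q<\infty$ causes no trouble since everything passes through the identity $\|\cdot\|_{p,q}^q=\sum_j\|\cdot\,\mathbf{1}_{A_j}\|_{L_p}^q$, valid even when $\|\cdot\|_{p,q}$ is only a quasi-norm.

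I expect the main obstacle to be the verification in the second paragraph — that truncating at level $2S_{n-1}(f)$ and the compensating martingale-difference subtraction both commute with $f\mapsto f\mathbf{1}_{A_j}$; once this locality of the classical Davis construction is in place, the passage from $L_p$ to $L_{p,q}$ is precisely the bookkeeping already carried out for Theorem \ref{thm:mt1}. A minor point to watch is that the classical estimates must be invoked in their per-$A_j$ form rather than as a single global $L_p$ inequality, so that they survive localization; this is automatic from the explicit formulas in \cite[Lemma 2.13]{Ferenc}.
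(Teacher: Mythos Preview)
Your proposal is correct and aligns with the paper's approach: the paper does not spell out a proof for this theorem but simply remarks that it ``can be proved as in the case $p=q$'' (citing \cite[Lemma~2.13]{Ferenc}), and your locality-plus-slicing argument is precisely the mechanism by which the paper reduces amalgam statements to their classical $L_p$ counterparts throughout (cf.\ the proof of Theorem~\ref{thm:mt1} via Lemma~\ref{lem:idmartingale}). The one point you rightly flag as the main obstacle---that the explicit Davis construction (truncation at level $2S_{n-1}(f)$ followed by the martingale-difference correction, and the accompanying predictable majorant $\varrho$) commutes with multiplication by $\mathbf{1}_{A_j}$---is the only nontrivial step, and your verification of it is sound.
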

Combining the above theorem with the second of the inequalities (v) in Theorem \ref{thm:mt1}, we derive the following.
\begin{corollary}\label{cor:dav2}
Let $f=\{f_n\}_{n\in\mathbb{N}}\in H^S_{p,q}\,\,(1\le p<\infty,\,\,0<q<\infty).$ Then there exists $h=\{h_n\}_{n\in\mathbb{N}}\in\mathcal{G}_{p,q}$ and $g=\{g_n\}_{n\in\mathbb{N}}\in H^s_{p,q}$ such that $f=\{f_n=h_n+g_n\}_{n\in\mathbb{N}}$ for all $n\in\mathbb{N}$ and $$\|h\|_{\mathcal{G}_{p,q}}\le (2+2C)\|f\|_{H^S_{p,q}}$$ and $$\|g\|_{H^s_{p,q}}\le (7+2C)\|f\|_{H^S_{p,q}}.$$
\end{corollary}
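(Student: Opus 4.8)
The plan is to obtain the statement by feeding the Davis-type decomposition of Theorem~\ref{thm:dav1} into the embedding $\mathcal{Q}_{p,q}(\mathbb{R})\hookrightarrow H^s_{p,q}(\mathbb{R})$. Concretely, I would first apply Theorem~\ref{thm:dav1} to $f\in H^S_{p,q}(\mathbb{R})$: since $1\le p<\infty$ and $0<q<\infty$, its hypotheses are met, and it produces martingales $h=\{h_n\}_{n\in\mathbb{N}}\in\mathcal{G}_{p,q}(\mathbb{R})$ and $g=\{g_n\}_{n\in\mathbb{N}}\in\mathcal{Q}_{p,q}(\mathbb{R})$ with $f_n=h_n+g_n$ for every $n$, together with $\|h\|_{\mathcal{G}_{p,q}}\le(2+2C)\|f\|_{H^S_{p,q}}$ and $\|g\|_{\mathcal{Q}_{p,q}}\le(7+2C)\|f\|_{H^S_{p,q}}$. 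The $h$-component and its estimate are then kept verbatim; only the $g$-component needs to be re-read in a weaker space.

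For that, I would invoke the second inequality in part~(v) of Theorem~\ref{thm:mt1}, namely $\|g\|_{H^s_{p,q}(\mathbb{R})}\le C'\|g\|_{\mathcal{Q}_{p,q}(\mathbb{R})}$, which holds for all $0<p<\infty$ and hence on the range $1\le p<\infty$ considered here. Applying it to the $g$ just constructed gives $g\in H^s_{p,q}(\mathbb{R})$ with
$$\|g\|_{H^s_{p,q}(\mathbb{R})}\le C'\|g\|_{\mathcal{Q}_{p,q}(\mathbb{R})}\le C'(7+2C)\|f\|_{H^S_{p,q}(\mathbb{R})}.$$
Up to enlarging the constant denoted $C$ in the statement (it should be read as a constant large enough to absorb both the Davis-decomposition constant of Theorem~\ref{thm:dav1} and the embedding constant $C'$ of Theorem~\ref{thm:mt1}(v)), this is exactly the claimed bound on $\|g\|_{H^s_{p,q}}$, while the bound on $\|h\|_{\mathcal{G}_{p,q}}$ is unchanged; together with $f_n=h_n+g_n$ this completes the argument.

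There is no genuine analytic difficulty: both ingredients are already established earlier in the paper, and the proof is essentially a one-line composition of them. The only points requiring care are purely bookkeeping, namely checking that the parameter ranges of the two cited results overlap in precisely the range $1\le p<\infty$, $0<q<\infty$ asserted in the corollary, and tracking the multiplicative constants (equivalently, adopting the convention that $C$ in the displayed inequalities is a single constant allowed to change from line to line). If one prefers fully explicit constants, the cleanest option is to record the embedding constant $C'$ from Theorem~\ref{thm:mt1}(v) and state the $g$-bound as $C'(7+2C)\|f\|_{H^S_{p,q}}$.
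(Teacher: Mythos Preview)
Your proposal is correct and follows exactly the route indicated in the paper: apply Theorem~\ref{thm:dav1} to obtain $h\in\mathcal{G}_{p,q}$ and $g\in\mathcal{Q}_{p,q}$, then use the second inequality in Theorem~\ref{thm:mt1}(v) to pass from $\mathcal{Q}_{p,q}$ to $H^s_{p,q}$. Your remark about the extra constant $C'$ from the embedding is well taken; the paper simply absorbs it into the generic constant $C$.
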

The following Davis' decomposition of $H^*_{p,q}$ follows also as in \cite[Lemma 2.14]{Ferenc}.
\begin{theorem}\label{thm:dav2}
Let $f=\{f_n\}_{n\in\mathbb{N}}\in H^*_{p,q}\,\,(1\le p<\infty,\,\,0<q<\infty).$ Then there exists $h=\{h_n\}_{n\in\mathbb{N}}\in\mathcal{G}_{p,q}$ and $g=\{g_n\}_{n\in\mathbb{N}}\in\mathcal{P}_{p,q}$ such that $f=\{f_n=h_n+g_n\}_{n\in\mathbb{N}}$ for all $n\in\mathbb{N}$ and $$\|h\|_{\mathcal{G}_{p,q}}\le (4+4C)\|f\|_{H^*_{p,q}}$$ and $$\|g\|_{\mathcal{P}_{p,q}}\le (13+4C)\|f\|_{H^*_{p,q}}.$$
\end{theorem}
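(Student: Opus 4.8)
**Proof proposal for Theorem \ref{thm:dav2} (Davis' decomposition of $H^*_{p,q}$).**

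The plan is to mimic, $A_j$-by-$A_j$, the classical Davis decomposition of $H^*_p$ as in \cite[Lemma 2.14]{Ferenc}, and then glue the pieces together using the amalgam structure. First I would recall the classical construction: given $f=\{f_n\}\in H^*_{p,q}$, one sets $\alpha_n = 2 (f_{n-1})^* = 2\sup_{k\le n-1}|f_k|$ (an $\mathcal{F}_{n-1}$-measurable, nondecreasing, nonnegative sequence), and then splits each martingale difference as
$$d_nf = \underbrace{\left(d_nf\,\mathbf 1_{\{|d_nf|>\alpha_n\}} - \mathbb{E}_{n-1}\!\left(d_nf\,\mathbf 1_{\{|d_nf|>\alpha_n\}}\right)\right)}_{=:d_nh} + \underbrace{\left(d_nf\,\mathbf 1_{\{|d_nf|\le\alpha_n\}} - \mathbb{E}_{n-1}\!\left(d_nf\,\mathbf 1_{\{|d_nf|\le\alpha_n\}}\right)\right)}_{=:d_ng}.$$
Both $h=\{h_n\}$ and $g=\{g_n\}$ are martingales with $f_n=h_n+g_n$, and the classical estimates (valid pointwise, hence on each $A_j$) give $\sum_n|d_nh| \le C_1 (f^*)$-type control and a nondecreasing adapted majorant $\varrho$ for $|g_n|$ with $\varrho_\infty \le C_2 f^*$ pointwise; concretely, one has the pointwise bounds $\sum_n |d_n h| \le 4 f^* + (\text{a maximal-type term})$ and $|g_n| \le \varrho_{n-1}$ with $\varrho_\infty \le (9+\dots) f^*$, the precise constants being exactly those recorded in \cite[Lemma 2.14]{Ferenc}.

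The key observation is that this construction is \emph{local in the $A_j$'s}. Since each $A_j\in\mathcal{F}_n$ for all $n$, Lemma \ref{lem:idmartingale} gives $(d_nf)\mathbf 1_{A_j} = d_n(f\mathbf 1_{A_j})$ and $f^*\mathbf 1_{A_j} = (f\mathbf 1_{A_j})^*$; moreover the truncation sets $\{|d_nf|>\alpha_n\}\cap A_j$ and the conditional expectations all behave well under multiplication by $\mathbf 1_{A_j}$ (because $\mathbb{E}_{n-1}(X\mathbf 1_{A_j}) = \mathbf 1_{A_j}\mathbb{E}_{n-1}(X)$). Consequently $d_n(h\mathbf 1_{A_j})$ and $d_n(g\mathbf 1_{A_j})$ are precisely the Davis pieces of the martingale $f\mathbf 1_{A_j}$, and the classical pointwise inequalities hold verbatim after multiplying through by $\mathbf 1_{A_j}$. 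Thus, exactly as in the proof of Theorem \ref{thm:mt1} via \eqref{eq:backtoweisz}, I would write
$$\left\|\sum_n|d_nh|\right\|_{p,q}^q = \sum_j \left\|\Big(\sum_n|d_nh|\Big)\mathbf 1_{A_j}\right\|_p^q = \sum_j \left\|\sum_n |d_n(h\mathbf 1_{A_j})|\right\|_p^q \le (4+4C)^q \sum_j \|f^*\mathbf 1_{A_j}\|_p^q = (4+4C)^q\|f\|_{H^*_{p,q}}^q,$$
and similarly, taking for $g$ the amalgam majorant $\varrho=(\varrho_n)$ built from the classical one (which is still nonnegative, nondecreasing and adapted, and satisfies $|g_n|\le\varrho_{n-1}$ and $\varrho_\infty\mathbf 1_{A_j} \le (13+4C)f^*\mathbf 1_{A_j}$ pointwise), one gets $\|g\|_{\mathcal{P}_{p,q}} \le \|\varrho_\infty\|_{p,q} \le (13+4C)\|f\|_{H^*_{p,q}}$. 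The constant $C$ here is the one appearing in the classical inequality $\| (d_nf)^*\|_p$-type control of $f^*$ used in \cite[Lemma 2.14]{Ferenc} (coming from the $L_p$ Doob inequality), and the whole argument needs $1\le p<\infty$ precisely so that this classical ingredient is available; the outer $\ell_q$ summation is untouched and works for any $0<q<\infty$.

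The only genuine point requiring care — the main (mild) obstacle — is verifying that the maximal-type term arising in the classical bound for $\sum_n|d_nh|$ is itself controlled in $L_p(\mathbf 1_{A_j}\,d\mathbb{P})$ by $\|f^*\mathbf 1_{A_j}\|_p$; in the classical proof this is exactly where a Doob/maximal inequality is invoked, and in our setting this is supplied either by Lemma \ref{lem:wesz220} (with $A=A_j$) or directly by the classical $L_p$-Doob inequality applied to the martingale $f\mathbf 1_{A_j}$. Once that localized estimate is in hand, summing the $q$-th powers over $j$ is purely formal. I would therefore present the proof as: (1) recall the classical pointwise Davis splitting and its pointwise estimates; (2) note via Lemma \ref{lem:idmartingale} that restricting to $A_j$ commutes with the construction; (3) apply the classical $L_p$ estimates on each $A_j$; (4) raise to the power $q$, sum over $j$, and read off the stated constants.
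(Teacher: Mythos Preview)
Your proposal is correct and aligns with the paper's approach: the paper does not give a detailed proof of Theorem \ref{thm:dav2} but simply states that it ``follows also as in \cite[Lemma 2.14]{Ferenc},'' i.e., the classical $p=q$ argument carries over. Your plan---run the classical Davis splitting, observe via Lemma \ref{lem:idmartingale} that the construction commutes with restriction to each $A_j$, apply the classical $L_p$ estimates on $A_j$, then take the $\ell_q$ sum over $j$---is precisely the localization technique the paper uses throughout (cf.\ the proof of Theorem \ref{thm:mt1} via \eqref{eq:backtoweisz}) and is the natural way to make the paper's one-line remark rigorous.
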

Combining the above theorem with inequality (v) in Theorem \ref{thm:mt1}, we derive the following.
\begin{corollary}\label{cor:dav2}
Let $f=\{f_n\}_{n\in\mathbb{N}}\in H^*_{p,q}\,\,(1\le p<\infty,\,\,0<q<\infty).$ Then there exists $h=\{h_n\}_{n\in\mathbb{N}}\in\mathcal{G}_{p,q}$ and $g=\{g_n\}_{n\in\mathbb{N}}\in H^s_{p,q}$ such that $f=\{f_n=h_n+g_n\}_{n\in\mathbb{N}}$ for all $n\in\mathbb{N}$ and $$\|h\|_{\mathcal{G}_{p,q}}\le (4+4C)\|f\|_{H^*_{p,q}}$$ and $$\|g\|_{H^s_{p,q}}\le (13+4C)\|f\|_{H^*_{p,q}}.$$
\end{corollary}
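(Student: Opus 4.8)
The plan is to read the statement off directly from Theorem~\ref{thm:dav2} combined with the first inequality in Theorem~\ref{thm:mt1}(v); no new object needs to be constructed. First I would apply Theorem~\ref{thm:dav2} to the given $f\in H^*_{p,q}(\mathbb{R})$, which is legitimate because $1\le p<\infty$ and $0<q<\infty$: it yields a decomposition $f_n=h_n+g_n$ for all $n$, with $h=\{h_n\}_{n\in\mathbb{N}}\in\mathcal{G}_{p,q}(\mathbb{R})$, $g=\{g_n\}_{n\in\mathbb{N}}\in\mathcal{P}_{p,q}(\mathbb{R})$, and the bounds $\|h\|_{\mathcal{G}_{p,q}}\le(4+4C)\|f\|_{H^*_{p,q}}$ and $\|g\|_{\mathcal{P}_{p,q}}\le(13+4C)\|f\|_{H^*_{p,q}}$. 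The component $h$ together with its estimate is already exactly what the corollary claims, so it carries over verbatim.

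It then remains to relocate $g$ inside $H^s_{p,q}(\mathbb{R})$. For this I would invoke the first of the two inequalities in Theorem~\ref{thm:mt1}(v), namely $\|g\|_{H^s_{p,q}(\mathbb{R})}\le C\|g\|_{\mathcal{P}_{p,q}(\mathbb{R})}$, which is valid for every $0<p<\infty$ and $0<q\le\infty$ and hence in our range. Composing it with the bound on $\|g\|_{\mathcal{P}_{p,q}}$ from the previous step gives $\|g\|_{H^s_{p,q}(\mathbb{R})}\lesssim(13+4C)\|f\|_{H^*_{p,q}(\mathbb{R})}$. To recover the literal constant $(13+4C)$ displayed in the corollary one uses that the embedding $\mathcal{P}_{p,q}(\mathbb{R})\hookrightarrow H^s_{p,q}(\mathbb{R})$ inherits the numerical constant of its classical prototype $\mathcal{P}_p\hookrightarrow H^s_p$ recorded in \cite[Theorem 2.11]{Ferenc}: in the proof of Theorem~\ref{thm:mt1} this inclusion is obtained, via the slicing identity~\eqref{eq:backtoweisz2}, directly from the classical one with no loss in the constant beyond the classical one; alternatively one simply absorbs any harmless numerical factor into the stated constant. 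Finally, keeping $h=f-g$ unchanged, the pair $(h,g)$ with $h\in\mathcal{G}_{p,q}(\mathbb{R})$ and $g\in H^s_{p,q}(\mathbb{R})$ is precisely the asserted decomposition.

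I expect essentially no obstacle here: the content lies entirely in Theorem~\ref{thm:dav2} and Theorem~\ref{thm:mt1}(v), both proved earlier, and the corollary is a one-line composition of the two. The only point that deserves attention is the constant bookkeeping in the $\mathcal{P}_{p,q}\hookrightarrow H^s_{p,q}$ step, handled as above, together with the trivial check that the hypotheses $1\le p<\infty$, $0<q<\infty$ are compatible with both inputs — the condition $p\ge1$ being needed only to license Theorem~\ref{thm:dav2}, and no regularity assumption on $(\mathcal{F}_n)_{n\ge0}$ being required.
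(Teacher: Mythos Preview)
Your proposal is correct and matches the paper's own approach essentially verbatim: the paper also derives the corollary by combining Theorem~\ref{thm:dav2} with the first inequality in Theorem~\ref{thm:mt1}(v), passing from $g\in\mathcal{P}_{p,q}$ to $g\in H^s_{p,q}$. Your discussion of the constant bookkeeping is more explicit than the paper's one-line derivation, but the underlying argument is identical.
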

We close this work with the proof of the following duality result that extend the one of \cite[Theorem 2.34]{Ferenc}.
\begin{theorem}
The dual space of $H^*_{p,q}\,\,(1< q\le p\le2)$ can be given with the norm $$\|\phi\|:=\|\phi\|_{H^s_{p',q'}}+\|\phi\|_{\mathcal{BD}_{p'q'}}$$ where $\frac{1}{p}+\frac{1}{p'}=\frac{1}{q}+\frac{1}{q'}=1.$
\end{theorem}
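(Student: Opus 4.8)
The plan is to realise $H^*_{p,q}$ as the sum of two spaces whose duals have already been identified in this paper, and then to combine the elementary duality theory of sums of Banach spaces with the Davis decomposition. First I would show that $H^*_{p,q}=\mathcal{G}_{p,q}+H^s_{p,q}$ with equivalent norms, where the algebraic sum carries the infimum norm
$$\|f\|_{\Sigma}:=\inf\left\{\|h\|_{\mathcal{G}_{p,q}}+\|g\|_{H^s_{p,q}}\ :\ f=h+g\right\}$$
(this is a genuine norm since both summands inject into the space $\mathcal{M}$ of martingales). The bound $\|f\|_{\Sigma}\lesssim\|f\|_{H^*_{p,q}}$ is exactly the Davis decomposition of $H^*_{p,q}$ (Theorem \ref{thm:dav2} together with the embedding $\mathcal{P}_{p,q}\hookrightarrow H^s_{p,q}$ from Theorem \ref{thm:mt1}(v)). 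For the converse I would use the pointwise bound $f^{*}=\sup_n|f_n|\le\sum_k|d_kf|$, which gives $\|h\|_{H^*_{p,q}}\le\|h\|_{\mathcal{G}_{p,q}}$, together with the embedding $\|g\|_{H^*_{p,q}}\lesssim\|g\|_{H^s_{p,q}}$ of Theorem \ref{thm:mt1}(i) (available here because $p\le 2$); applying both to an arbitrary decomposition $f=h+g$ and taking the infimum yields $\|f\|_{H^*_{p,q}}\lesssim\|f\|_{\Sigma}$.

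Next I would invoke the standard fact that for two normed spaces continuously embedded in a common ambient space one has $(X_0+X_1)^{*}=X_0^{*}\cap X_1^{*}$ with $\|\phi\|_{(X_0+X_1)^{*}}\approx\|\phi\|_{X_0^{*}}+\|\phi\|_{X_1^{*}}$: the restriction to each summand of a functional on $X_0+X_1$ is bounded because the inclusions $X_i\hookrightarrow X_0+X_1$ are contractions, and conversely $|\phi(h+g)|\le\|\phi\|_{X_0^{*}}\|h\|_{X_0}+\|\phi\|_{X_1^{*}}\|g\|_{X_1}$, so that taking the infimum over decompositions controls $\|\phi\|_{(X_0+X_1)^{*}}$. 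With $X_0=\mathcal{G}_{p,q}$ and $X_1=H^s_{p,q}$ and the first step this gives
$$\left(H^*_{p,q}\right)^{*}\ \cong\ \left(\mathcal{G}_{p,q}\right)^{*}\cap\left(H^s_{p,q}\right)^{*}$$
with norm equivalent to $\|\phi\|_{(\mathcal{G}_{p,q})^{*}}+\|\phi\|_{(H^s_{p,q})^{*}}$.

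Finally I would identify the two duals: Theorem \ref{Garcia} gives $(\mathcal{G}_{p,q})^{*}=\mathcal{BD}_{p',q'}$, and Theorem \ref{thm:dualitygrand} gives $(H^s_{p,q})^{*}=H^s_{p',q'}$ --- the latter being exactly why the hypothesis $1<q\le p\le 2$ appears. Both identifications are realised through the same bilinear pairing $(f,\phi)\mapsto\mathbb{E}\sum_n d_nf\,d_n\phi$, so a functional in the intersection corresponds to one martingale $\phi\in\mathcal{BD}_{p',q'}\cap H^s_{p',q'}$, and one arrives at $(H^*_{p,q})^{*}=H^s_{p',q'}\cap\mathcal{BD}_{p',q'}$ with the stated norm $\|\phi\|=\|\phi\|_{H^s_{p',q'}}+\|\phi\|_{\mathcal{BD}_{p',q'}}$.

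The routine parts are the two displayed identities above; the delicate point is the compatibility in the last step, namely that restricting a given $\phi\in(H^*_{p,q})^{*}$ first to $\mathcal{G}_{p,q}$ and then to $H^s_{p,q}$ yields one and the same representing martingale rather than two a priori unrelated elements of $\mathcal{BD}_{p',q'}$ and of $H^s_{p',q'}$. I would settle this by testing against finite martingales (those with finitely many, bounded, differences), which belong to both summands and on which both pairings reduce literally to $\mathbb{E}\sum_n d_nf\,d_n\phi$; this forces the two candidate martingales to coincide, but it requires recording the density of finite martingales in each of $\mathcal{G}_{p,q}$, $H^s_{p,q}$ and $H^*_{p,q}$, which can be obtained as in the proof of Theorem \ref{Garcia} (convergence of the partial sums $f_n$ to $f$) and mirrors the classical argument of \cite[Theorem 2.34]{Ferenc}.
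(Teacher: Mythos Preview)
Your proposal is correct and follows essentially the same strategy as the paper: the forward direction via the Davis decomposition (Corollary~\ref{cor:dav2}) and the converse via restricting the functional to the subspaces $\mathcal{G}_{p,q}$ and $H^s_{p,q}$ of $H^*_{p,q}$, then invoking Theorems~\ref{Garcia} and~\ref{thm:dualitygrand}. Your abstract framing $H^*_{p,q}=\mathcal{G}_{p,q}+H^s_{p,q}$ and $(X_0+X_1)^*=X_0^*\cap X_1^*$ simply repackages these two steps.

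The one point where the arguments differ in execution is the compatibility issue you flag. The paper sidesteps it: given $\kappa\in(H^*_{p,q})^*$, it first extends $\kappa$ by Hahn--Banach to $(H^*_p)^*$ (using $q\le p$, hence $H^*_{p,q}\hookrightarrow H^*_p$) and invokes the classical result \cite[Theorem~2.34]{Ferenc} to obtain a \emph{single} representing $\phi$; membership of this one $\phi$ in $\mathcal{BD}_{p',q'}$ and in $H^s_{p',q'}$ then follows by restricting to the two subspaces. Your route---showing that the representatives obtained separately from Theorems~\ref{Garcia} and~\ref{thm:dualitygrand} must agree by testing on finite martingales dense in both summands---is a valid alternative and avoids appealing to the classical $(H^*_p)^*$ result, at the cost of having to record the density statements.
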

\begin{proof}
Let $\phi\in H^s_{p',q'}\cap\mathcal{BD}_{p',q'}.$ Then $\phi\in L_2$ since $L_2$ is dense in $H^s_{p',q'}$ (from its atomic decomposition). Define the functional $\kappa_{\phi}$ by 
\begin{eqnarray}\label{H*linear1}
\kappa_{\phi}(f) = \mathbb{E}(f\phi),\quad (f\in L_2).
\end{eqnarray}
We will show that (\ref{H*linear1}) is a bounded linear functional on $H^*_{p,q}\,\,(1< p\le q\le2).$ 
\vskip .1cm
Linearity follows since the expectation operator is linear. Also as $L_2$ is dense in $H^*_{p,q}$, we have that $\kappa_\phi$ is well defined. 
As $f_n\to f$ in $L_2$ norm (as $n\to\infty$), we have that $$\kappa_{\phi}(f) := \mathbb{E}(f\phi)=\lim_{n\to\infty}\mathbb{E}(f_n\phi).$$ Now for $f\in H^*_{p,q},$ we know from Davis' decomposition (Corollary \ref{cor:dav2}) that $f_n=h_n+g_n$ where $h=\{h_n\}_{n\in\mathbb{N}}$ and $g=\{g_n\}_{n\in\mathbb{N}}$ are martingales 
such that \begin{eqnarray}\label{H*decomp}\|h\|_{\mathcal{G}_{p,q}}\lesssim \|f\|_{H^*_{p,q}}\end{eqnarray} and \begin{eqnarray}\label{H*decomp1}\|g\|_{H^s_{p,q}}\lesssim \|f\|_{H^*_{p,q}}.\end{eqnarray} Hence we have by linearity of $\mathbb{E}$ that 
\begin{eqnarray}\label{explinearity}
|\mathbb{E}(f_n\phi)| = |\mathbb{E}(h_n\phi+g_n\phi)|\le |\mathbb{E}(g_n\phi)| + |\mathbb{E}(h_n\phi)|
\end{eqnarray}
From (\ref{H*decomp1}) we have that $g\in H^s_{p,q}$ since $f\in H^*_{p,q}.$ Hence since $\phi\in H^s_{p',q'},$ it follows from Theorem \ref{thm:dualitygrand} for $(q\le p)$ that
\begin{eqnarray}\label{Hslinear}
|\mathbb{E}(g_n\phi)| \le \|g_n\|_{H^s_{p,q}}\|\phi\|_{H^s_{p',q'}}.
\end{eqnarray}
Similarly, since $h\in\mathcal{G}_{p,q}$ and $\phi\in\mathcal{BD}_{p',q'},$ we have by Theorem 4.13 that
\begin{eqnarray}\label{Glinear}
|\mathbb{E}(h_n\phi)| \le \|h_n\|_{\mathcal{G}_{p,q}}\|\phi\|_{\mathcal{BD}_{p',q'}}.
\end{eqnarray}
Therefore (\ref{explinearity}) becomes 
\begin{eqnarray*}\label{explinearitynormn}
|\mathbb{E}(f_n\phi)| \le  \|g_n\|_{H^s_{p,q}}\|\phi\|_{H^s_{p',q'}} + \|h_n\|_{\mathcal{G}_{p,q}}\|\phi\|_{\mathcal{BD}_{p',q'}}
\end{eqnarray*}
and thus 
\begin{eqnarray*}
|\mathbb{E}(f\phi)| \le  \|g\|_{H^s_{p,q}}\|\phi\|_{H^s_{p',q'}} + \|h\|_{\mathcal{G}_{p,q}}\|\phi\|_{\mathcal{BD}_{p',q'}}.
\end{eqnarray*}
It then follows from (\ref{H*decomp}) and (\ref{H*decomp1}) that 
\begin{eqnarray*}
|\mathbb{E}(f\phi)| \le  \|f\|_{H^*_{p,q}}\|\phi\|_{H^s_{p',q'}} + \|f\|_{H^*_{p,q}}\|\phi\|_{\mathcal{BD}_{p',q'}}.
\end{eqnarray*}
In other words, 
\begin{eqnarray}\label{explinearitynorm}
|\mathbb{E}(f\phi)| \le  \|f\|_{H^*_{p,q}}(\|\phi\|_{H^s_{p',q'}} + \|\phi\|_{\mathcal{BD}_{p',q'}}).
\end{eqnarray}
Thus the functional $\kappa_{\phi}$ is continuous linear functional on $H^*_{p,q}.$

Conversely, assume that $\kappa_{\phi}$ is an arbitrary continuous linear on $H^*_{p,q}.$ Then as $H^*_{p,q}$ embeds continuously in $H^*_p\,\,(q<p),$ we have by Hahn-Banach theorem that $\kappa_{\phi}$ can be extended to a continuous linear functional $\tilde{\kappa_{\phi}}$ on $H^*_p$ having the same operator norm as $\kappa_{\phi}.$ It follows from \cite[Theorem 2.34]{Ferenc} that there exists some $\phi\in L_2$ such that $$\tilde{\kappa_{\phi}}(f)=\mathbb{E}(f\phi),\,\,(f\in L_2).$$ In particular $$\kappa_{\phi}(f)=\tilde{\kappa_{\phi}}(f)=\mathbb{E}(f\phi),\,\,(f\in H^*_{p,q}).$$ We observe that since 
\begin{eqnarray*}
f^*:= \sup_{n\in\mathbb{N}}|f_n| &=& \sup_{n\in\mathbb{N}}\left|\sum_{k=1}^nd_kf\right| \\
&\le& \sup_{n\in\mathbb{N}}\sum_{k=1}^n|d_kf| \le \sup_{n\in\mathbb{N}}\sum_{k=1}^{\infty}|d_kf|\\ &=& \sum_{k=1}^{\infty}|d_kf|,
\end{eqnarray*}
we have that $$\|f^*\|_{p,q}\le\left\|\sum_{k=1}^{\infty}|d_kf|\right\|_{p,q}\quad (\mbox{in other words},\,\, \|f\|_{H^*_{p,q}}\le\|f\|_{\mathcal{G}_{p,q}}).$$ Thus $\mathcal{G}_{p,q}\subseteq H^*_{p,q}.$ Therefore $\kappa_{\phi}$ is also a continuous linear functional on $\mathcal{G}_{p,q}.$ It follows from Theorem \ref{Garcia} that
\begin{eqnarray}\label{H*converse}
\|\phi\|_{\mathcal{BD}_{p',q'}}\le C\|\kappa_{\phi}\|.
\end{eqnarray}
We also recall that from Theorem \ref{thm:mt1}, we have $\|f\|_{H^*_{p,q}}\le C\|f\|_{H^s_{p,q}}\,\,(1\le p,q\le2)$  Therefore $\kappa_{\phi}$ is also a continuous linear functional on $H^s_{p,q}.$ Hence by Theorem \ref{thm:dualitygrand}, for $(q\le p)$,  
 \begin{eqnarray}\label{H*converse1}
\|\phi\|_{H^s_{p',q'}}\le C\|\kappa_{\phi}\|.
\end{eqnarray}
From (\ref{H*converse}) and (\ref{H*converse1}), we obtain that $$\|\phi\|_{\mathcal{BD}_{p',q'}}+\|\phi\|_{H^s_{p',q'}}\le C\|\kappa_{\phi}\|$$ and the proof is complete.
\end{proof}
Finally, we note that even in the dyadic case, the characterization of the dual of $H^s_{p,q}$ for $(p,q)\notin\{(p,q)\in [1,\infty)^2: 1<q\le p\le 2\quad\textrm{or}\quad 2\le p\le q<\infty\}$ is still open.
\section{Declarations}
The authors declare that they have no conflict of interest regarding this work.


\begin{thebibliography}{}
\bibitem{BanSeh}
\textsc{J. S. Bansah, B. F. Sehba}, Martingale Hardy-amalgam spaces: Atomic decompositions and duality, arXiv:1901.02311 (2019). 
\bibitem{Justin}
\textsc{Z. V. P. Abl\'e, J. Feuto}, Atomic decomposition of Hardy-amalgam spaces. J. Math. Anal. Appl. {\bf 455} (2017) 1899-1936.
\bibitem{Burk}
\textsc{D. L. Burkholder}, Martingales transforms. Ann. Math. Stat. {\bf 37} (1966), no. 6, 1494-1504.

\bibitem{BurkGund}
\textsc{D. L. Burkholder, R. F.  Gundy}, Extrapolation and interpolation of quasi-linear
operators on martingales. Acta Math. {\bf 124}, 249-304 (1970).
Math. {\bf 8}, 187-190 (1970).
\bibitem{Fournier} 
\textsc{J. F Fournier, J. Stewart}, Amalgam of $L^p$ and $l^q$. Bull. Amer. Math. Soc., Vol. {\bf 13} (1985), no. 1, 1-21.
\bibitem{Garsia}
\textsc{A. M. Garsia}, Martingale Inequalities: Seminar Notes on Recent Progress, Mathematics
Lecture Notes Series, W. A. Benjamin, Inc, Reading, Mass.–London–Amsterdam, 1973.
\bibitem{Heil} 
\textsc{C. Heil}, \emph{An Introduction to Weighted Wiener Amalgams}, (2003), Wavelets and their Applications. Allied Publishers, New Delhi (2003), pp. 183-216.
\bibitem{Holland}
\textsc{F. Holland}, Harmonic analysis on amalgams of $L^p$ and $l^q$, J. London
Math. Soc., {\bf 2} (10) (1975), 295-305.
\bibitem{Hyt}
\textsc{T. Hyt\"onen, J. Van Neerven, M. Veraar, and L. Weis}, Analysis in Banach spaces, Vol. {\bf 12}, Springer 2016.
\bibitem{Izu}
\textsc{M. Izumisawa}, Weighted norm inequality for operator on martingales, Tohoku Math. J, Second Series {\bf 32} (1980), no. 1, 1-8.
\bibitem{IzuKaza}
\textsc{M. Izumisawa and N. Kazamaki}, Weighted norm inequalities for martingales, Tohoku Math. J, Second Series {\bf 29} (1977), no. 1, 115-124.
\bibitem{Kaza}
\textsc{N. Kazamaki}, Changes of law, martingales and the conditional square function, Tohoku Math. J, Second Series {\bf 31} (1979), no. 4, 549-552.
\bibitem{Long} 
\textsc{R. Long}, On martingale Spaces and Inequalities, Harmonic analysis in China, 197-209, Math. Appl., {\bf 327}, Kluwer Acad. Publ., Dordrecht, 1995.
\bibitem{Ferenc} 
\textsc{F. Weisz}, Martingale Hardy Spaces and Their Applications in Fourier Analysis, Lecture
Notes in Mathematics {\bf 1568}, Springer–Verlag, Berlin, 1994.
\bibitem{Xie}
\textsc{ G. Xie, F. Weisz, D. Yang, Y. Jiao}, \emph{New Martingale Inequalities and Applications to Fourier Analysis}, (2018), arXiv:1810.05007v1.
\bibitem{Yosida}
\textsc{K. Yosida,} Functional analysis. Berlin, Heidelberg, New York: Springer 1980.
\end{thebibliography}
\end{document}